\newtheorem{thm}{Theorem}[section]
\newtheorem*{thmn}{Theorem}
\newtheorem{lemma}[thm]{Lemma}
\newtheorem{prop}[thm]{Proposition}
\newtheorem{cor}[thm]{Corollary}
\theoremstyle{definition}
\newtheorem{exa}[thm]{Example}
\theoremstyle{remark}
\newtheorem{rem}[thm]{Remark}
\numberwithin{equation}{section}
\newcommand{\vanish}[1]{\relax}       % comments out inclosed text
\def\qedsymbol{\hbox to 1ex{\llap{\rule{0.25pt}{1ex}}\rlap{\rule{1ex}{0.25pt}}\lower0.25pt\rlap{\raise1ex\rlap{\rule{1ex}{0.25pt}}}\hskip1ex\llap{\rule{0.25pt}{1ex}}}}
\def\rlqed{\rlap{\rule{\hsize}{0pt}\kern-1ex\kern-1em\qed}} %% \qed rechts
\newcounter{aufzi}
\newenvironment{aufzi}{\begin{list}{ {\upshape(\alph{aufzi})}}{
        \usecounter{aufzi}
        \topsep1ex
%        \partopsep
        \parsep0cm
        \itemsep0.8ex
        \leftmargin1cm
%        \rightmargin
%        \listparindent
        \labelwidth0.5cm
        \labelsep0.3cm
        %\itemindent-0.3cm
}}
{\end{list}}
\newcounter{aufzii}
\newcounter{aufziii}
\newenvironment{aufziii}{\begin{list}{ {\upshape\arabic{aufziii})}}{
        \usecounter{aufziii}
        \topsep1ex
%        \partopsep
        \parsep0cm
        \itemsep0.8ex
        \leftmargin1cm
%        \rightmargin
%        \listparindent
        \labelwidth0.5cm
        \labelsep0.3cm
        %\itemindent-0.3cm
}}
{\end{list}}
\newcommand{\bbE}{\mathbb{E}}
\newcommand{\calB}{\mathcal{B}}
\def\bfX{\mathbf{X}}
\def\ue{\mathrm{e}}
\def\uw{\mathrm{w}}
\renewcommand{\ue}{\mathrm{e}}    %  for Euler's number
\newcommand{\R}{\mathbb{R}}     % real numbers etc
\newcommand{\N}{\mathbb{N}}
\newcommand{\Z}{\mathbb{Z}}
\newcommand{\Q}{\mathbb{Q}}
\newcommand{\sdif}{\triangle}      % symmetric difference
\newcommand{\Bigcap}[2][\relax]{%
 \ifx#1\relax \bigcap_{#2}
 \else \bigcap^{#1}_{#2}
 \fi}
\newcommand{\Bigcup}[2][\relax]{%
 \ifx#1\relax \bigcup_{#2}
 \else \bigcup^{#1}_{#2}
 \fi}
\DeclareMathOperator{\Aut}{Aut}
\def\fact#1#2{#1/#2}
\def\tfact#1#2{#1/#2}
\def\fact#1#2{{\raise0.2em\hbox{$#1$}\kern-0.2em/\kern-0.1em\lower0.2em\hbox{$#2$}}}
\def\tfact#1#2{{\raise0.1em\hbox{\small$#1$}\kern-0.1em/\kern-0.1em\lower0.1em\hbox{\small$#2$}}}
\newcommand{\norm}[2][\relax]{%                             % norm
   %\ensuremath{\left\Vert{#2}\right\Vert_{#1}}}
   \ifx#1\relax \ensuremath{\left\Vert#2\right\Vert}
   \else \ensuremath{\left\Vert#2\right\Vert_{#1}}
   \fi}
\newcommand{\Bnorm}[2][\relax]{%        % Big norm, but usually no too big
   %\ensuremath{\left\Vert{#2}\right\Vert_{#1}}}
   \ifx#1\relax \ensuremath{\Bigl\Vert#2\Bigr\Vert}
   \else \ensuremath{\Bigl\Vert#2\Bigr\Vert_{#1}}
   \fi}
\newcommand{\tdprod}[2]{\ensuremath{%
  \setbox0=\hbox{\ensuremath{\langle#1,#2 \rangle}}
  \dimen@\ht0
  \advance\dimen@ by \dp0 (#1\rule[-\dp0]{0pt}{\dimen@}\,|#2\hspace{1pt})}}
\newcommand{\dprod}[2]{\ensuremath{%
  \setbox0=\hbox{\ensuremath{\left\langle#1,#2\right\rangle}}
  \dimen@\ht0
  \advance\dimen@ by \dp0 \left\langle\left.#1\rule[-\dp0]{0pt}{\dimen@}\,\right|#2\hspace{1pt}\right\rangle}}
\newcommand{\bdprod}[2]{\ensuremath{%
  \setbox0=\hbox{\ensuremath{\bigl\langle#1,#2\bigr\rangle}}
  \dimen@\ht0
  \advance\dimen@ by \dp0 \bigl\langle#1\bigl|\rule[-\dp0]{0pt}{\dimen@}\bigr.#2\hspace{1pt}\bigr\rangle}}
\newcommand{\Bdprod}[2]{\ensuremath{%
  \setbox0=\hbox{\ensuremath{\Bigl\langle#1,#2\Bigr\rangle}}
  \dimen@\ht0
  \advance\dimen@ by \dp0 \Bigl\langle#1\Bigl|\rule[-\dp0]{0pt}{\dimen@}\Bigr.#2\hspace{1pt}\Bigr\rangle}}
\newcommand{\tsprod}[2]{\ensuremath{%
  \setbox0=\hbox{\ensuremath{(#1,#2)}}
  \dimen@\ht0
  \advance\dimen@ by \dp0 (#1\rule[-\dp0]{0pt}{\dimen@}\,|#2\hspace{1pt})}}
\newcommand{\sprod}[2]{\ensuremath{%
  \setbox0=\hbox{\ensuremath{\left(#1,#2\right)}}
  \dimen@\ht0
  \advance\dimen@ by \dp0 \left(\left.#1\rule[-\dp0]{0pt}{\dimen@}\,\right|#2\hspace{1pt}\right)}}
\newcommand{\bsprod}[2]{\ensuremath{%
  \setbox0=\hbox{\ensuremath{\bigl(#1,#2\bigr)}}
  \dimen@\ht0
  \advance\dimen@ by \dp0 \bigl(#1\bigl|\rule[-\dp0]{0pt}{\dimen@}\bigr.#2\hspace{1pt}\bigr)}}
\newcommand{\Bsprod}[2]{\ensuremath{%
  \setbox0=\hbox{\ensuremath{\Bigl(#1,#2\Bigr)}}
  \dimen@\ht0
  \advance\dimen@ by \dp0 \Bigl(#1\Bigl|\rule[-\dp0]{0pt}{\dimen@}\Bigr.#2\hspace{1pt}\Bigr)}}
\def\bell#1{\ell^{\mathrm{#1}}}   % l^p spaces
\newcommand{\Ell}[2][\relax]{%     % Ell-p spaces
   \ifx#1\relax \mathrm{L}^{\mathrm{#2}}
   \else \mathrm{L}^{\mathrm{#2}}_{\mathrm{#1}}
   \fi}
\renewcommand{\Ell}[2][\relax]{%    ??????????????????????????????????????????
   \ifx#1\relax \mathrm{L}^{\!#2}
   \else \mathrm{L}^{\!#2}_{\mathrm{#1}}
   \fi}
\newcommand{\Wee}[2][\relax]{%    %   Sobolev spaces
   \ifx#1\relax \mathrm{W}^{\mathrm{#2}}
   \else \mathrm{W}^{\mathrm{#2}}_{\mathrm{#1}}
   \fi}
\newcommand{\Har}[2][\relax]{%       %Hardy spaces
   \ifx#1\relax \mathsf{H}^{\mathsf{#2}}
   \else   \mathsf{H}^{\mathsf{#2}}_{\mathrm{#1}}
   \fi}
\def\prX{\mathrm X}    % probability space
\def\rlqed{\rlap{\rule{\hsize}{0pt}\kern-1ex\kern-1em\qed}}
\def\maketag@@@@@#1{\llap{\hbox to\hsize{\m@th\normalfont#1}}%
\gdef\tagform@##1{\maketag@@@{(\ignorespaces##1\unskip\@@italiccorr)}}}
\def\eqtext#1{\gdef\tagform@##1{\maketag@@@@@{\ignorespaces##1\unskip\@@italiccorr\hfill}}\tag{#1}}%
\def\reqtext#1{\gdef\tagform@##1{\maketag@@@@@{\hfill\ignorespaces##1\unskip\@@italiccorr}}\tag{#1}}%
\def\leqtext#1{\gdef\tagform@##1{\maketag@@@@@{\ignorespaces##1\unskip\@@italiccorr}}\tag{#1}}%
\newcommand{\cntm}[1]{\left|#1\right|}
\newcommand{\Rp}{\R_{\geq 0}}
\newcommand{\Zp}{\Z_{\geq 0}}
\newcommand{\bl}{\mathrm B}
\newcommand{\cin}[1]{\mathrm{I}(#1)}
\newcommand{\avg}[1]{\bbE_{#1}}
\newcommand{\indi}[1]{\mathbf{1}_{#1}}
\date{\today}
\begin{document}

\title[Effective Birkhoff's theorem]{On effective Birkhoff's ergodic theorem\\
for computable actions of amenable groups.}

\author[N. Moriakov]{Nikita Moriakov}

\email{n.moriakov@tudelft.nl}

\date{\today}

\begin{abstract}
We introduce computable actions of computable groups and
prove the following versions of effective Birkhoff's ergodic
theorem. Let $\Gamma$ be a computable amenable group, then there
always exists a canonically computable tempered two-sided F{\o}lner sequence $(F_n)_{n \geq
  1}$ in $\Gamma$. For a computable, measure-preserving, ergodic action of $\Gamma$
on a Cantor space $\{ 0,1\}^{\mathbb N}$ endowed with a computable probability
measure $\mu$, it is shown that for every bounded
lower semicomputable function $f$ on $\{0,1\}^{\mathbb N}$ and for every Martin-L\"{o}f random $\omega \in \{0,1\}^{\mathbb N}$
the equality
\[
\lim\limits_{n \to \infty} \frac{1}{|F_n|} \sum\limits_{g \in F_n} f(g
\cdot \omega) = \int\limits f d \mu
\]
holds, where the averages are taken with respect to a canonically
computable tempered two-sided F{\o}lner sequence $(F_n)_{n \geq
  1}$. We also prove the same identity for \emph{all} lower semicomputable
$f$'s in the special case when $\Gamma$ is a computable group of polynomial growth and
$F_n:=\mathrm{B}(n)$ is the F{\o}lner sequence of balls around the
neutral element of $\Gamma$.
\end{abstract}

\maketitle

\section{Introduction}
\label{s.intro}

A classical ergodic theorem of Birkhoff asserts that, if $\varphi: X \to X$ is an ergodic measure-preserving transformation on a probability space $(X,\mu)$, then for every $f \in \Ell{1}(X)$ we have
\begin{equation}
\label{eq.birkthmi}
\lim\limits_{n \to \infty} \frac 1 n \sum\limits_{i=1}^n f(\varphi^i x) = \int f d \mu
\end{equation}
for $\mu$-a.e. $x \in X$. We refer, e.g., to \cite[Chapter
11]{efhn2015} for the proof. A celebrated result of Lindenstrauss \cite{lindenstrauss2001} gives a generalization of Birkhoff's ergodic theorem for measure-preserving actions of amenable groups and ergodic averages, taken along tempered F{\o}lner sequences.

One may also wonder if the averages in Equation \eqref{eq.birkthmi}
converge for every Martin-L\"{o}f random $x$ and every computable
$f$. An affirmative answer was given by V'yugin in \cite{vyugin1997} for computable
$f$'s. Later, it was proved in \cite{shen2012} that the ergodic averages converge for all lower semi-computable $f$'s. 

In so far, the effective ergodic theorems have only been proved for
actions of $\Z$, and it is a natural question if one can generalize
effective Birkhoff's ergodic theorem for measure-preserving actions of more general
groups (for instance, the groups $\Z^d$, groups of polynomial growth
and so on). However, one must first define \emph{computable actions}
of groups appropriately. In this article we define computable actions
of computable groups in a natural way in Section \ref{ss.cps}, which agrees with the
`classical' definition in the case of $\Z$-actions, and obtain the
following generalizations of the results from \cite{shen2012}. First of
all, we derive a generalization of Ku\v{c}era's theorem in Section \ref{ss.kt}, which is the main technical tool of the article.
\begin{thmn}
Let $\Gamma$ be a computable amenable group and $(\{ 0,1\}^{\N},\mu,\Gamma)$ be a
computable ergodic $\Gamma$-system. Let $U \subset \{ 0,1\}^{\N}$ be an effectively
open subset  such that $\mu(U)<1$. Let 
\[
U^{\ast}:=\bigcap\limits_{g \in \Gamma} g^{-1} U
\]
be the set of all points $\omega \in \{ 0,1\}^{\N}$ whose orbit remains in
$U$. Then $U^{\ast}$ is an effectively null set.
\end{thmn}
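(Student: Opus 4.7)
The plan is to build a Martin-L\"of test for $U^{*}$ by combining a clopen approximation of $U$ with ergodicity and the computability of the $\Gamma$-action. First I would write $U=\bigcup_m U_m$ as an effective monotone union of clopen subsets with computable measures $\mu(U_m)$, which is possible since $U$ is effectively open in the Cantor space; after refining the enumeration (e.g. enumerating cylinder subsets of $U$ by length) I may assume $\mu(U_{m+1})-\mu(U_m)\le 2^{-m}$, so that $\mu(U\setminus U_m)\le 2^{-m}$. I fix an effective enumeration $(g_i)_{i\ge 1}$ of $\Gamma$. Because the action is computable, each $g_i$ is a computable homeomorphism of $\{0,1\}^{\N}$, so the sets $V_{m,N}:=\bigcap_{i\le N}g_i^{-1}U_m$ are clopen and their measures $\mu(V_{m,N})$ are computable reals.

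The ergodic heart of the argument is the vanishing estimate: for each $m$, the set $B_m:=\bigcap_{g\in\Gamma}g^{-1}U_m=\bigcap_N V_{m,N}$ is $\Gamma$-invariant and contained in $U_m\subseteq U$, so ergodicity together with $\mu(U_m)\le\mu(U)<1$ forces $\mu(B_m)=0$. Since $V_{m,N}\downarrow B_m$ and $\mu(V_{m,N})$ is computable, I can effectively compute indices $N(m,k)$ with $\mu(V_{m,N(m,k)})<2^{-m-k}$. Setting $A_k:=\bigcup_m V_{m,N(m,k)}$ then yields a uniformly effectively open sequence with $\mu(A_k)<2^{-k}$, a natural candidate for a Martin-L\"of test for $U^{*}$.

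It remains to verify that $U^{*}\subseteq A_k$. The inclusion $\bigcup_m B_m\subseteq A_k$ is immediate, so the task reduces to covering the set $U^{*}\setminus\bigcup_m B_m$, which consists of those $\omega$ whose orbit remains in $U$ while escaping every clopen $U_m$. For this I would invoke the effective Birkhoff ergodic theorem for the computable indicators $\mathbf{1}_{U_m^{\mathrm c}}$ along the canonically computable tempered two-sided F{\o}lner sequence $(F_n)$ constructed earlier in the paper: on every Martin-L\"of random point, the averages $|F_n|^{-1}\sum_{g\in F_n}\mathbf{1}_{U_m^{\mathrm c}}(g\omega)$ converge to $1-\mu(U_m)$. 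Because $\mathbf{1}_{U_m^{\mathrm c}}(g\omega)=\mathbf{1}_{U\setminus U_m}(g\omega)$ holds identically for $\omega\in U^{*}$ and every $g$, and because the summable budget $\sum_m\mu(U\setminus U_m)\le 1$ is under effective control, I expect a reverse-Markov inequality applied to these averages, coupling the F{\o}lner index $n$ to the clopen level $m$, to produce a supplementary uniformly effectively open cover of $U^{*}\setminus\bigcup_m B_m$.

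The main difficulty will be assembling all these pieces into a single genuine Martin-L\"of test in a uniform and effective way: neither $\mu(W_n)$ for $W_n:=\bigcap_{i\le n}g_i^{-1}U$ nor the rate of convergence in the mean ergodic theorem is effectively computable a priori, so the pairing of $n$ and $m$ must be chosen carefully so that the measure decay of the ergodic averages dominates the summability budget. If the clean Borel--Cantelli route proves too delicate, a fallback is to replace $A_k$ by the c.e.\ diagonal union $\bigcup\{V_{m,N}:\mu(V_{m,N})<2^{-k-m-N}\}$ and to establish coverage of $U^{*}$ through a K\"onig-lemma-type compactness argument on the tree of finite orbit approximations afforded by the F{\o}lner exhaustion of $\Gamma$.
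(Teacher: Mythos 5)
There are two genuine gaps here, and the second one is fatal. First, your opening normalization is impossible: for an effectively open $U$ you can of course produce an effective increasing sequence of clopen sets $U_m\nearrow U$ with $\mu(U_m)$ computable, but you cannot arrange $\mu(U\setminus U_m)\le 2^{-m}$. If you could, then $\mu(U)$ would be a computable real (approximated by the computable numbers $\mu(U_m)$ at a known rate), whereas for an effectively open set $\mu(U)$ is in general only lower semicomputable. So the ``summable budget'' $\sum_m\mu(U\setminus U_m)$ that your third paragraph relies on is not under effective control, and the Borel--Cantelli/reverse-Markov coupling you describe has nothing to run on. (Your treatment of $\bigcup_m B_m$ is fine and rather clean --- $B_m$ is $\Gamma$-invariant, ergodicity gives $\mu(B_m)=0$, and since the $V_{m,N}$ are clopen with computable measures decreasing to $0$ you can search for $N(m,k)$ --- but $U^{\ast}$ is typically much larger than $\bigcup_m B_m$, so this covers only the easy part.)

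Second, and more seriously, the step that is supposed to cover $U^{\ast}\setminus\bigcup_m B_m$ invokes the effective Birkhoff ergodic theorem at Martin-L\"of random points. In this paper that theorem is \emph{derived from} the very statement you are proving (Ku\v{c}era's theorem is the main technical tool in the proof of Lemma \ref{l.ebirk2stamen} and Theorem \ref{t.ebirk2amen}), so the argument is circular; and no independent effective pointwise ergodic theorem for amenable group actions is available at this stage. The remaining suggestions (``I expect a reverse-Markov inequality\dots'', a K\"onig-lemma fallback) are not worked out, and this is precisely where the content of the theorem lies. The paper's proof avoids both problems: it never needs a modulus for $\mu(U_m)\to\mu(U)$, and it only uses the \emph{mean} ergodic theorem, made effective by an unbounded search. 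Concretely, for each cylinder $I_i$ of $U_{k-1}$ one searches for a F{\o}lner index $n(i,k)$ such that $\bigl\Vert\,\mathbb{E}_{g\in F_{n(i,k)}^{-1}}\mathbf{1}_{I_i}(g\cdot\omega)-\mu(I_i)\bigr\Vert_2$ is small --- a condition that can be certified using clopen approximations $\Delta_{g,i}^k$ of the sets $gI_i$ and the computability of $\mu$ --- and then Cauchy--Schwarz gives $\mu\bigl(I_i\cap\bigcap_{g\in F_{n(i,k)}}g^{-1}U_{k-1}\bigr)\le q\,\mu(I_i)+q^k2^{-i}$, whence $\mu(U_k)<(k+1)q^k$ and $U^{\ast}\subseteq\bigcap_kU_k$ is effectively null. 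You would need to replace your third paragraph by an argument of this kind (or some other non-circular effective ergodic input) for the proposal to become a proof.
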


Using this generalization of Ku\v{c}era's theorem and the results of
Lindenstrauss, we derive the first main theorem in Section \ref{ss.mt}. To simplify the notation, we denote the averages by
$\avg{g \in F}:=\frac{1}{\cntm{F}}\sum\limits_{g \in F}$.

\begin{thmn}
Let $\Gamma$  be a computable amenable group with a canonically computable tempered
two-sided F{\o}lner sequence $(F_n)_{n \geq 1}$. Suppose that
$(\{ 0,1\}^{\N},\mu,\Gamma)$ is a computable ergodic $\Gamma$-system. For every bounded
lower semicomputable $f$ and for every Martin-L\"{o}f random $\omega \in \{ 0,1\}^{\N}$
the equality
\[
\lim\limits_{n \to \infty} \avg{g \in F_n} f(g
\cdot \omega) = \int\limits f d \mu
\]
holds.
\end{thmn}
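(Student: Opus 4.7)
The plan is to reduce the statement to two ingredients---an effective Lindenstrauss pointwise ergodic theorem for \emph{computable} integrands along the sequence $(F_n)_{n \geq 1}$, and the generalized Ku\v{c}era theorem stated above---and then to bridge the gap between computable and lower semicomputable $f$ using Ku\v{c}era.

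First I would write $f = \sup_k f_k$ where $(f_k)_{k \geq 1}$ is a uniformly bounded, computable, increasing sequence of rational simple functions, so that $\int f_k \nearrow \int f$. As a preliminary step, one establishes that for each computable bounded $f_k$ the convergence $\avg{g \in F_n} f_k(g \cdot \omega) \to \int f_k\, d\mu$ holds on every Martin-L\"of random $\omega$; this is obtained by effectivizing Lindenstrauss's covering argument so that the associated maximal inequality along $(F_n)$ carries a computable constant, and combining it with effective $\mathrm{L}^1$-convergence on a countable dense set of simple computable functions. Granting this, the lower bound is immediate: $\avg{g \in F_n} f(g \cdot \omega) \geq \avg{g \in F_n} f_k(g \cdot \omega)$ for each $k$, so taking $n \to \infty$ and then $\sup$ in $k$ yields
\[
\liminf_{n \to \infty} \avg{g \in F_n} f(g \cdot \omega) \geq \sup_k \int f_k\, d\mu = \int f\, d\mu
\]
for every Martin-L\"of random $\omega$.

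The matching upper bound is where Ku\v{c}era's theorem is essential. Fix a rational $\epsilon > 0$ and set $h_k := f - f_k$, a nonnegative, bounded, lower semicomputable function with $\int h_k\, d\mu \to 0$ as $k \to \infty$. The effective maximal inequality $\mu(\{\sup_n \avg{\gamma \in F_n} h_k(\gamma \cdot \omega) > \epsilon\}) \leq C \int h_k\, d\mu / \epsilon$ guarantees that, for $k$ sufficiently large, the effectively open set
\[
U := \bigcup_{n \geq 1} \{\omega \st \avg{\gamma \in F_n} h_k(\gamma \cdot \omega) > \epsilon\}
\]
satisfies $\mu(U) < 1$. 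By the generalized Ku\v{c}era theorem stated above, $U^{\ast} = \bigcap_{g \in \Gamma} g^{-1} U$ is then effectively null, so every Martin-L\"of random $\omega$ admits some $g \in \Gamma$ with $g \cdot \omega \notin U$, i.e., $\avg{\gamma \in F_n} h_k(\gamma g \cdot \omega) \leq \epsilon$ for every $n$. Because $(F_n)$ is two-sided F{\o}lner and $h_k$ is bounded, the standard estimate
\[
\left| \avg{\gamma \in F_n} h_k(\gamma g \cdot \omega) - \avg{\gamma \in F_n} h_k(\gamma \cdot \omega) \right| \leq \| h_k \|_\infty \cdot |F_n g \triangle F_n| / |F_n| \to 0
\]
gives $\limsup_n \avg{\gamma \in F_n} h_k(\gamma \cdot \omega) \leq \epsilon$ for every random $\omega$. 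Combined with $\avg{\gamma \in F_n} f_k(\gamma \cdot \omega) \to \int f_k\, d\mu \leq \int f\, d\mu$ on randoms, we obtain $\limsup_n \avg{\gamma \in F_n} f(\gamma \cdot \omega) \leq \int f\, d\mu + \epsilon$, and letting $\epsilon \searrow 0$ through the rationals completes the proof.

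The main obstacle I anticipate is the preliminary effective Birkhoff theorem for computable integrands: Lindenstrauss's covering lemma must be effectivized so as to produce a maximal inequality with an \emph{explicit, computable} constant along the canonically computable tempered two-sided F{\o}lner sequence, and then $\mu$-a.e.\ convergence must be upgraded to convergence on every ML-random via an effectively null exceptional set. The two-sidedness of $(F_n)$ plays a second essential role in the upper bound above, where it is used to transfer Ku\v{c}era's conclusion from the shifted orbit point $g \cdot \omega$ back to $\omega$; without right-F{\o}lner-ness one would obtain no control on the Birkhoff averages along the orbit of $\omega$ itself.
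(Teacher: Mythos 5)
Your overall architecture shares two key ideas with the paper's proof (using Ku\v{c}era's theorem to produce a single group element $g$ with $g\cdot\omega$ outside a bad set of measure less than one, and then transferring back to $\omega$ via limsup-invariance of averages of bounded functions along a two-sided F{\o}lner sequence), but it rests on a preliminary claim that you assert rather than prove: that $\avg{g \in F_n} f_k(g\cdot\omega) \to \int f_k\,d\mu$ for every ML-random $\omega$ and every bounded computable $f_k$. You propose to obtain this by ``effectivizing Lindenstrauss's covering argument so that the maximal inequality carries a computable constant,'' and you yourself flag this as the main obstacle. That is a genuine gap: this preliminary statement is essentially the theorem itself in a special case, and the effectivization you gesture at is neither carried out nor needed. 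Your upper bound additionally invokes a maximal inequality $\mu(\sup_n \avg{\gamma\in F_n} h_k > \epsilon) \leq C\int h_k/\epsilon$ for tempered F{\o}lner sequences applied to a lower semicomputable integrand, which is a further unproven ingredient (though here only the non-effective bound $\mu(U)<1$ is actually used).

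The paper circumvents both issues. For the upper bound it applies Ku\v{c}era directly to the sets $A_k := \{x : \sup_{n\geq k} \avg{g\in F_n} f(g\cdot x) > q\}$ with $q$ a rational exceeding $\int f\,d\mu$; these are effectively open precisely because $f$ is lower semicomputable and the action is computable, and the \emph{non-effective} Lindenstrauss pointwise theorem gives $\mu(\bigcap_k A_k)=0$, hence $\mu(A_k)<1$ for \emph{some} $k$ --- one need not compute which, since Ku\v{c}era's conclusion holds for every ML-random point once such a $k$ exists. For the lower bound it approximates $f$ from below by finite linear combinations $h$ of indicators of effectively open sets and reduces to the indicator case (Lemma 3.2 of the paper), where the liminf bound is obtained by applying the limsup bound to the effectively open complements of clopen approximations $\Delta_m \subseteq U$. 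If you want to salvage your decomposition $f=\sup_k f_k$, note that the $f_k$ can be taken to be rational step functions over cylinders, and convergence for those already follows from the indicator-of-effectively-open-sets case applied to each cylinder and its (clopen, hence effectively open) complement --- no effective covering lemma or computable maximal constant is required anywhere.
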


In a special case, when $\Gamma$ is a computable group of polynomial
growth, we are able to remove the boundedness assumption on $f$ and prove the following version of effective Birkhoff's ergodic theorem.
\begin{thmn}
Let $\Gamma$  be a computable group of polynomial growth with the F{\o}lner
sequence of balls around $\ue \in \Gamma$ given by
\[
F_n:= \{ g \in \Gamma: \| g \| \leq n\} \quad \text{for } n \geq 1.
\]
Suppose that $(\{ 0,1\}^{\N},\mu,\Gamma)$ is a computable ergodic $\Gamma$-system. For every lower semicomputable $f$ and for every Martin-L\"{o}f random $\omega \in \{ 0,1\}^{\N}$
the equality
\[
\lim\limits_{n \to \infty} \avg{g \in F_n} f(g
\cdot \omega) = \int\limits f d \mu
\]
holds.
\end{thmn}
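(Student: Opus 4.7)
The plan is to bootstrap from the previous (bounded-$f$) theorem via a truncation argument, using an effective weak-$(1,1)$ maximal inequality adapted to the ball F{\o}lner sequence of a polynomial-growth group.

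For each $k \in \N$ set $f_k := \min(f,k)$, which is bounded and lower semicomputable uniformly in $k$. The previous theorem applied to $f_k$ gives, for every Martin-L\"of random $\omega$ and each $k$,
\[
\lim_{n \to \infty} \avg{g \in F_n} f_k(g \cdot \omega) = \int f_k \, d\mu.
\]
Since $f \geq f_k$ pointwise and $\int f_k \, d\mu \nearrow \int f \, d\mu$ by monotone convergence, $\liminf_n \avg{g \in F_n} f(g\cdot\omega) \geq \int f \, d\mu$. When $\int f \, d\mu = \infty$ this already finishes the proof. Otherwise, setting $h_k := f - f_k = (f-k)_+$ (non-negative, lower semicomputable, with $\int h_k \, d\mu \searrow 0$), the matching upper bound reduces to showing that for every Martin-L\"of random $\omega$,
\[
\lim_{k \to \infty} \sup_n \avg{g \in F_n} h_k(g\cdot\omega) = 0.
\]

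The essential ingredient is an effective weak-$(1,1)$ maximal inequality. Since $\Gamma$ has polynomial growth and $F_n$ is the ball of radius $n$, the sequence satisfies a doubling property $\cntm{F_{2n}} \leq C \cntm{F_n}$ with $C$ computable from the growth function, and also $F_n F_n^{-1} \subset F_{2n}$. A Vitali-type covering argument then yields a computable constant $C'$ for which
\[
\mu\Bigl\{\omega \st \sup_n \avg{g \in F_n} h(g \cdot \omega) > \lambda \Bigr\} \leq \frac{C' \, \|h\|_1}{\lambda}
\]
holds for every non-negative $h \in \Ell{1}(\mu)$; moreover the super-level set on the left is effectively open uniformly in $\lambda$ and in a c.e.\ description of $h$, since the ergodic averages $\omega \mapsto \avg{g \in F_n} h(g \cdot \omega)$ are lower semicomputable uniformly in $n$ (the $\Gamma$-action being computable). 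Combining this inequality with an effective approximation $f = \sup_m f^{(m)}$ from below by bounded computable functions, the sets $V_{k,\lambda} := \{\omega \st \sup_n \avg{g \in F_n} h_k(g \cdot \omega) > \lambda\}$ can be assembled into a Martin-L\"of test exhibiting $\bigcap_k V_{k,\lambda}$ as effectively null for every rational $\lambda > 0$. Every Martin-L\"of random $\omega$ then escapes $V_{k,\lambda}$ for all sufficiently large $k$, yielding $\sup_n \avg{g \in F_n} h_k(g\cdot\omega) \to 0$ and hence the desired upper bound.

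The main obstacle is establishing the effective maximal inequality: one must make the classical Vitali covering argument constructive, so that $C'$ is computable from the growth function and the super-level sets of the maximal function are uniformly effectively open. A secondary delicacy is arranging the Martin-L\"of test so that it does not require computability of $\int f \, d\mu$ -- only its lower semicomputability -- which is possible because one can control the tails via the computable quantities $\int f^{(m)} d\mu$ and the differences $f^{(M)} - f^{(m)}$ of computable approximants, to which the bounded case of the previous theorem already applies.
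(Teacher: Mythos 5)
Your lower bound via truncation is fine, and the weak-$(1,1)$ maximal inequality for ball averages on a group of polynomial growth is a genuine classical fact (a Vitali/Tempelman covering argument does work for the doubling sequence $(F_n)$). The gap is in the upper bound, precisely at the point you call a ``secondary delicacy'': it is fatal as stated. To turn the sets $V_{k,\lambda}$ into a Martin-L\"of test you must exhibit a \emph{computable} map $m\mapsto k(m)$ with $\mu(V_{k(m),\lambda})<2^{-m}$, hence with $\|(f-k(m))_+\|_1<\lambda 2^{-m}/C'$. But $\|(f-k)_+\|_1=\int f\,d\mu-\int \min(f,k)\,d\mu$ is a difference of two merely lower semicomputable quantities and is not semicomputable in either direction; in general the minimal admissible $k(m)$ need not be dominated by any computable function, so no computable selection exists. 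Passing to computable approximants $f^{(m)}$ does not help, since the tail $\int(f-f^{(m)})\,d\mu$ still involves the non-computable number $\int f\,d\mu$. What your construction actually produces is a decreasing, uniformly effectively open sequence whose measures tend to $0$ non-effectively, i.e.\ a generalized test; its intersection is avoided by weakly $2$-random points but need not be effectively $\mu$-null, so the conclusion for all Martin-L\"of random $\omega$ does not follow. (A single non-uniform constant can be hard-coded into a test, but an infinite non-computable sequence of indices cannot, as it would destroy uniform effective openness in $m$.) Your route would go through only under an extra hypothesis such as computability, or at least upper semicomputability, of $\|f\|_1$, which is a strictly weaker theorem.

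The paper avoids this issue entirely and needs no maximal inequality. Its proof of the upper bound is verbatim the bounded-case argument: for a rational $q>\int f\,d\mu$ one forms the effectively open sets $A_k=\{x:\sup_{n\ge k}\avg{g\in F_n}f(g\cdot x)>q\}$, uses Lindenstrauss' pointwise theorem only to know that \emph{some} $A_k$ satisfies $\mu(A_k)<1$ (mere existence, no effective measure estimate), applies the generalized Ku\v{c}era theorem (Theorem \ref{t.kt}) to find $g_0$ with $g_0\cdot\omega\notin A_k$, and then transfers the bound $\limsup_n\avg{g\in F_n}f(g\cdot(g_0\cdot\omega))\le q$ back to $\omega$ by limsup-invariance. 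The entire role of the polynomial-growth hypothesis is that Lemma \ref{l.lsinv2} gives this invariance for \emph{arbitrary nonnegative} functions when averaging over balls (because $F_nh\subseteq F_{n+1}$ and $\cntm{F_{n+1}}/\cntm{F_n}\to 1$), which is exactly what removes the boundedness assumption of Theorem \ref{t.ebirk2amen}; that is the one new ingredient you should be invoking here.
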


\section{Preliminaries}
\subsection{Computable Amenable Groups}
\label{ss.cag}
In this section we will remind the reader of the classical notion of
amenability and state some results from ergodic theory of amenable
group actions. We stress that all the groups that we consider are
discrete and countably infinite. 

Let $\Gamma$ be a group with the counting measure $\cntm{\cdot}$. A
sequence of finite subsets $(F_n)_{n \geq 1}$ of $\Gamma$ is called
\begin{aufziii}
\item a \textbf{left F{\o}lner sequence} (resp. \textbf{right F{\o}lner sequence}) if for every  $g \in \Gamma$ one has
\begin{equation*}
    \frac{\cntm{F_n \sdif g F_n}}{\cntm{F_n}} \to 0 \quad \left( \text{resp. } \frac{\cntm{F_n \sdif F_n g}}{\cntm{F_n}} \to 0 \right);
\end{equation*}

\item a \textbf{($C$-)tempered sequence} if there is a constant $C$ such that for every $j$ one has $$\cntm{\bigcup\limits_{i<j} F_i^{-1} F_j} < C \cntm{F_j}.$$
\end{aufziii}

A group is called \textbf{amenable} if it has a left F{\o}lner
sequence. A sequence of finite subsets $(F_n)_{n \geq 1}$ of $\Gamma$
is called a \textbf{two-sided F{\o}lner sequence} if it is a left and
a right F{\o}lner sequence simultaneously.

We refer the reader, e.g., to \cite{shnver2003} for the standard
notions of a computable function and a computable/enumerable set,
which will appear in this article. A sequence of finite subsets
$(F_n)_{n \geq 1}$ of $\N$ is called \textbf{canonically computable}
if there is an algorithm that, given $n$, prints the set $F_n$ and
halts. Formally speaking, for a finite set $A=\{ x_1,x_2,\dots,x_k\}
\subset \N$, we call the number $\cin{A}:=\sum\limits_{i=1}^k 2^{x_i}$
the \textbf{canonical index} of $A$. Hence a sequence $(F_n)_{n \geq
  1}$ of finite subsets of $\N$ is canonically computable if and only
if the (total) function $n \mapsto \cin{F_n}$ is computable.

A group $\Gamma$ with the composition operation $\circ$ is called a
\textbf{computable group} if, as a set, $\Gamma$ is a computable
subset of $\N$ and the total function $\circ: \Gamma \times \Gamma \to
\Gamma$ is computable. It is easy to show that in a computable group
$\Gamma$ the inversion operation $g \mapsto g^{-1}$ is a total computable
function. We refer the reader to \cite{rabin1960} for
more details.

Any discrete amenable group $\Gamma$ admits a two-sided F{\o}lner
sequence. Furthermore, if the group is computable, then there exists a
canonically computable two-sided F{\o}lner sequence. To prove that we will need the following result.
\begin{lemma}
\label{l.owlemma}
Given a discrete amenable group $\Gamma$, for any finite symmetric set $K
\subset \Gamma$ such that $\ue \in \Gamma$ and any $\varepsilon>0$ there
exists a finite subset $F \subset \Gamma$ such that
\begin{equation}
\label{eq.2sidedfs}
\cntm{K F K} - \cntm{F} \leq \varepsilon \cntm{F}.
\end{equation}
\end{lemma}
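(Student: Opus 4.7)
The plan is to reduce the desired estimate to the existence of a finite set $F$ that is simultaneously left- and right-approximately-invariant under multiplication by elements of $K$ (a two-sided F{\o}lner condition), and then invoke the classical existence of such sets in any discrete amenable group. I read the hypothesis as $\ue \in K$ (the statement as printed says $\ue \in \Gamma$, which is trivial, and the symmetry and identity assumptions on $K$ are used below). Since $\ue \in K$, we have $F \subseteq KFK$, so the quantity to control is $\cntm{KFK \setminus F}$.

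The combinatorial reduction runs as follows. Decomposing $KFK = \bigcup_{k_1, k_2 \in K} k_1 F k_2$ and using the inclusion
\[
k_1 F k_2 \setminus F \subseteq (k_1 F k_2 \setminus F k_2) \cup (F k_2 \setminus F),
\]
together with the cardinality identity $\cntm{k_1 F k_2 \setminus F k_2} = \cntm{k_1 F \setminus F}$ (right translation by $k_2$ is a bijection), I obtain
\[
\cntm{KFK} - \cntm{F} \leq \cntm{K} \sum_{k \in K} \cntm{kF \setminus F} + \cntm{K} \sum_{k \in K} \cntm{Fk \setminus F}.
\]
Hence, setting $\eta := \varepsilon / (2 \cntm{K}^2)$, it suffices to exhibit a finite $F \subseteq \Gamma$ with $\cntm{kF \triangle F} \leq \eta \cntm{F}$ and $\cntm{Fk \triangle F} \leq \eta \cntm{F}$ for every $k \in K$, i.e.\ a two-sided $(K,\eta)$-F{\o}lner set.

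For the second step I would invoke the classical fact that any discrete amenable group admits two-sided $(K,\eta)$-F{\o}lner sets for arbitrary finite $K$ and $\eta > 0$. This can be obtained by composing a left-invariant and a right-invariant mean on $\ell^\infty(\Gamma)$ to produce a two-sided invariant mean; the Day--Namioka theorem then yields a finitely supported probability measure on $\Gamma$ which is simultaneously approximately left- and right-$K$-invariant in $\ell^1$, and Namioka's level-set slicing argument extracts the desired set $F$ from its level sets. An equivalent route: the product group $\Gamma \times \Gamma$ is amenable and acts on $\Gamma$ by the measure-preserving transitive action $(g_1, g_2) \cdot x := g_1 x g_2^{-1}$, and a F{\o}lner condition for this amenable action applied to the finite set $K \times K \subseteq \Gamma \times \Gamma$ gives a subset of $\Gamma$ with the required two-sided invariance.

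The main obstacle is precisely this upgrade from a one-sided to a two-sided F{\o}lner condition; the first reduction is elementary. The standard tools---two-sided invariant means combined with the Day--Namioka approximation, or the amenability of the $\Gamma \times \Gamma$-action on $\Gamma$---resolve it, and the parameter $\eta = \varepsilon/(2\cntm{K}^2)$ then plugs straight back into the combinatorial bound to finish the proof.
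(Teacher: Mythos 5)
Your argument is correct. Note that the paper itself gives no proof of this lemma --- it simply cites Ornstein--Weiss \cite[I.\S 1, Proposition 2]{ow1987} --- so there is nothing internal to compare against; what you supply is a standard and sound derivation. Your reading of the hypothesis as $\ue \in K$ (rather than the evidently misprinted $\ue \in \Gamma$) is the right one, and it is exactly what makes $F \subseteq KFK$ and hence $\cntm{KFK}-\cntm{F}=\cntm{KFK\setminus F}$. The combinatorial reduction is clean: from $k_1Fk_2\setminus F\subseteq (k_1Fk_2\setminus Fk_2)\cup(Fk_2\setminus F)$ and the bijectivity of right translation you correctly get
\[
\cntm{KFK}-\cntm{F}\;\leq\;\cntm{K}\sum_{k\in K}\cntm{kF\setminus F}+\cntm{K}\sum_{k\in K}\cntm{Fk\setminus F},
\]
so that a two-sided $(K,\varepsilon/(2\cntm{K}^2))$-F{\o}lner set suffices (the symmetry of $K$ is never actually needed). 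The remaining ingredient --- existence of two-sided F{\o}lner sets in a discrete amenable group --- is classical, and both routes you sketch are legitimate: the two-sided invariant mean followed by the Day--Namioka $\ell^1$-approximation and level-set slicing, or the F{\o}lner property of the transitive $\Gamma\times\Gamma$-action $(g_1,g_2)\cdot x=g_1xg_2^{-1}$ applied to $K\times K$ (Namioka's argument applies verbatim to $\ell^1$ of a transitive $G$-set carrying an invariant mean). If you wanted a fully self-contained write-up you would need to spell out one of these two classical steps, but as a proof outline nothing is missing.
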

\noindent We refer the reader to \cite[I.\S 1, Proposition 2]{ow1987} for the
proof. 

\begin{lemma}
\label{l.ex2stfs}
Let $\Gamma$ be a computable amenable group. Then there exists a
canonically computable two-sided F{\o}lner sequence $(F_n)_{n \geq 1}$.
\end{lemma}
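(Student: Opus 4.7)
The plan is to construct the sequence $(F_n)_{n\ge 1}$ by an exhaustive search whose termination is guaranteed by Lemma \ref{l.owlemma}. Since $\Gamma$ is a computable subset of $\N$, I can effectively enumerate its elements as $g_1,g_2,\dots$, and for each $n$ I would set
\[
K_n \eqdef \{\ue,g_1,g_1^{-1},g_2,g_2^{-1},\dots,g_n,g_n^{-1}\},
\]
which is a finite symmetric subset of $\Gamma$ containing $\ue$, and its canonical index is computable uniformly in $n$ because both the composition law and the inversion $g\mapsto g^{-1}$ in a computable group are total computable functions. By Lemma \ref{l.owlemma} applied with $K = K_n$ and $\varepsilon = 1/n$, there exists a finite subset $F\subset \Gamma$ with $\cntm{K_n F K_n}\le (1+1/n)\cntm{F}$; I would let $F_n$ be the first such $F$ found by the following search.

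To implement the search canonically computably, I enumerate all finite subsets $F\subset \N$ by their canonical indices $k=0,1,2,\dots$, decoding $k$ back to a finite set and testing (i) that every element of the set belongs to $\Gamma$ (possible since $\Gamma\subset \N$ is a computable set) and (ii) the inequality $\cntm{K_n F K_n}\le (1+1/n)\cntm{F}$. Condition (ii) is decidable for each candidate $F$ because the product set $K_n F K_n$ can be computed element-by-element using the computable composition law and its cardinality then read off. By Lemma \ref{l.owlemma} the search must terminate, and so the function $n\mapsto \cin{F_n}$ is total computable; hence $(F_n)_{n\ge 1}$ is canonically computable.

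It remains to verify that $(F_n)_{n\ge 1}$ is a two-sided F{\o}lner sequence. Fix $g\in\Gamma$; then $g=g_m$ for some $m$, so for every $n\ge m$ we have $g,g^{-1}\in K_n$ and $\ue\in K_n$. From $g\in K_n$ we obtain $gF_n\subset K_n F_n\subset K_n F_n K_n$, and similarly $F_n g\subset F_n K_n\subset K_n F_n K_n$, while $F_n\subset K_n F_n K_n$ trivially. Since $\cntm{gF_n}=\cntm{F_n g}=\cntm{F_n}$, we have
\[
\cntm{F_n\sdif gF_n}=2\cntm{gF_n\setminus F_n}\le 2\bigl(\cntm{K_n F_n K_n}-\cntm{F_n}\bigr)\le \tfrac{2}{n}\cntm{F_n},
\]
and the analogous bound for $\cntm{F_n\sdif F_n g}$ follows identically. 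Hence both ratios tend to $0$ as $n\to\infty$, proving that $(F_n)_{n\ge 1}$ is a two-sided F{\o}lner sequence.

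The main point requiring care is not the F{\o}lner verification itself (which is a direct unwinding of the inequality from Lemma \ref{l.owlemma}), but ensuring effectiveness of the search step: this rests on the fact that the computable group structure lets us compute products $K_n F K_n$ and their cardinalities from canonical indices, so the predicate ``$\cntm{K_n F K_n}\le (1+1/n)\cntm{F}$'' is decidable uniformly in $n$ and in the canonical index of $F$. Once this is in place, the Ornstein--Weiss lemma supplies the existence of a witness and thus makes the search a terminating algorithm.
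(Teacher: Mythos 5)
Your proof is correct and follows essentially the same route as the paper's: apply Lemma \ref{l.owlemma} to the symmetrized set of the first $n$ group elements with $\varepsilon = 1/n$, take the witness of smallest canonical index, and unwind the inequality $\cntm{K_nFK_n}\le(1+1/n)\cntm{F}$ to get the two-sided F{\o}lner property. You merely make explicit the decidability of the search predicate and the symmetric-difference estimate, which the paper leaves as ``observe'' and ``it is easy to see''.
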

\begin{proof}
First of all, observe that given $K \subset \Gamma$,
$\varepsilon>0$ as in Lemma \ref{l.owlemma} and a finite set $F
\subset \Gamma$ satisfying Equation \eqref{eq.2sidedfs}, we
have
\[
\frac{\cntm{g F \setminus F}}{\cntm{F}} \leq \varepsilon
\]
and
\[
\frac{\cntm{F g \setminus F}}{\cntm{F}} \leq \varepsilon
\]
for all $g \in K$. Let $K_n$ be the finite set of the first $n$
elements of the computable group $\Gamma$. Then, for every
$n=1,2,\dots$ we apply Lemma \ref{l.owlemma} to the set $K_n \cup
K_n^{-1} \cup \{ \ue \}$ and
$\varepsilon_n:=1/n$ and find the finite set $F_n$ with the smallest
canonical index $\cin{F_n}$ satisfying Equation
\eqref{eq.2sidedfs}. It is easy to see that $(F_n)_{n \geq 1}$ is
indeed a two-sided F{\o}lner sequence.
\end{proof}

Every F{\o}lner sequence has a tempered F{\o}lner
 subsequence. Furthermore, the construction of a tempered F{\o}lner
 subsequence from a given canonically computable F{\o}lner sequence is
 `algorithmic'. The proof is essentially contained in
 \cite[Proposition 1.4]{lindenstrauss2001}, but we provide it for
 reader's convenience below.
\begin{prop}
\label{pr.tempsseq}
Let $(F_n)_{n \geq 1}$ be a canonically computable F{\o}lner sequence in a computable group $\Gamma$. Then there is a computable function $i \mapsto n_i$ s.t. the subsequence $(F_{n_i})_{i \geq 1}$ is a canonically computable tempered F{\o}lner subsequence.
\end{prop}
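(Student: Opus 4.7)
The plan is to imitate Lindenstrauss's construction from \cite[Proposition 1.4]{lindenstrauss2001}, verifying at each step that the ingredients can be carried out by a halting algorithm. I would set $n_1 := 1$ and proceed inductively: having chosen $n_1 < n_2 < \cdots < n_{k-1}$, form the finite set
\[
S_{k-1} \eqdef \bigcup_{i < k} F_{n_i}^{-1}
\]
and search for the smallest $n > n_{k-1}$ satisfying
\[
\cntm{S_{k-1} F_n} \leq 2\, \cntm{F_n},
\]
then declare $n_k := n$.

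Such an $n$ exists because $(F_n)_{n \geq 1}$ is a left F{\o}lner sequence: for each fixed $h \in \Gamma$ one has $\cntm{h F_n \setminus F_n}/\cntm{F_n} \to 0$, and summing the (finitely many) bounds for $h \in S_{k-1}$ yields $\cntm{S_{k-1} F_n \setminus F_n}/\cntm{F_n} \to 0$, which forces the displayed inequality for all sufficiently large $n$. Temperedness with constant $C = 2$ is then immediate from
\[
\bigcup_{i < k} F_{n_i}^{-1} F_{n_k} = S_{k-1} F_{n_k},
\]
and the sub-sequence property trivially preserves the F{\o}lner condition.

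The step that deserves the most care is the effectivity of the construction. Since $\Gamma \subseteq \N$ is a computable group, the inversion map and the multiplication map are computable, so from the canonical index $\cin{F_n}$ I can compute $\cin{F_n^{-1}}$, then $\cin{S_{k-1}}$ as a finite union of known sets, then the canonical index and cardinality of the product $S_{k-1} F_n$ by enumerating all pairwise products. Hence the predicate ``$\cntm{S_{k-1} F_n} \leq 2\,\cntm{F_n}$'' is decidable uniformly in $k$ and $n$, and the unbounded search for the smallest qualifying $n$ halts by the paragraph above. This yields a total computable function $i \mapsto n_i$, and canonical computability of $(F_{n_i})_{i \geq 1}$ follows by composing $i \mapsto n_i$ with $n \mapsto \cin{F_n}$.

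The only genuine obstacle I anticipate is bookkeeping the translation between the abstract construction and operations on canonical indices of finite subsets of $\Gamma$; once one observes that unions, inverses, products, and cardinalities of finite sets given by canonical indices are all algorithmic in the presence of computable group operations, the rest is routine.
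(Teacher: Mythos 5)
Your proposal is correct and follows essentially the same route as the paper: both effectivize Lindenstrauss's construction by an inductive unbounded search for the smallest index making $\bigcup_{i<k} F_{n_i}^{-1}F_{n_k}$ at most twice the size of $F_{n_k}$, with existence from the left F{\o}lner property and decidability of the test predicate from computability of the group operations and canonical indices. The only cosmetic point is that the definition of temperedness asks for a strict inequality, so with your test you should quote the constant as $C=3$ (or note the inequality is eventually strict); this does not affect the argument.
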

\begin{proof}
We define $n_i$ inductively as follows. Let $n_1:=1$. If $n_1,\dots,n_i$ have been determined, we set $\widetilde F_i:= \bigcup\limits_{j \leq i} F_{n_j}$. Take for $n_{i+1}$ the first integer greater than $i+1$ such that
\begin{equation*}
\cntm{F_{n_{i+1}}  \sdif \widetilde F_i^{-1} F_{n_{i+1}}} \leq \frac 1 {\cntm{\widetilde F_i}}
\end{equation*}
The function $i \mapsto n_i$ is total computable. It follows that
\begin{equation*}
\cntm{ \bigcup\limits_{j \leq i} F_{n_j}^{-1} F_{n_{i+1}}} \leq 2 \cntm{ F_{n_{i+1}}},
\end{equation*}
hence the sequence $(F_{n_i})_{i \geq 1}$ is $2$-tempered. Since the F{\o}lner sequence $(F_n)_{n \geq 1}$ is canonically computable and the function $i \mapsto n_i$ is computable, the F{\o}lner sequence $(F_{n_i})_{i \geq 1}$ is canonically computable and tempered.
\end{proof}

\noindent Let us state an immediate corollary.
\begin{cor}
\label{c.cc2sidedfs}
Let $\Gamma$ be a computable amenable group. Then there exists a
canonically computable, tempered two-sided F{\o}lner sequence
$(F_n)_{n \geq 1}$ in $\Gamma$.
\end{cor}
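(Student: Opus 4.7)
The plan is to combine Lemma \ref{l.ex2stfs} and Proposition \ref{pr.tempsseq} in the obvious way. First I would invoke Lemma \ref{l.ex2stfs} to produce a canonically computable two-sided F{\o}lner sequence $(G_n)_{n \geq 1}$ in $\Gamma$. In particular, this sequence is a (left) F{\o}lner sequence, so Proposition \ref{pr.tempsseq} applies: there is a computable function $i \mapsto n_i$ such that the subsequence $(G_{n_i})_{i \geq 1}$ is canonically computable and tempered.

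The remaining point is to verify that $(G_{n_i})_{i \geq 1}$ inherits the two-sided F{\o}lner property from $(G_n)_{n \geq 1}$. This is immediate from the definition: the conditions $\cntm{G_n \sdif g G_n}/\cntm{G_n} \to 0$ and $\cntm{G_n \sdif G_n g}/\cntm{G_n} \to 0$ for each fixed $g \in \Gamma$ pass to any subsequence, so $(G_{n_i})_{i \geq 1}$ is a two-sided F{\o}lner sequence. Setting $F_i := G_{n_i}$ yields the desired sequence.

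There is no real obstacle here; the whole content is packaged in the two previous results, and the only thing to check is the trivial fact that subsequences preserve both the left and right F{\o}lner conditions (while Proposition \ref{pr.tempsseq} uses only the left F{\o}lner condition, the tempered property it produces involves only left translates and hence is compatible with the two-sided setting).
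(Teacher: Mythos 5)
Your proposal is correct and is exactly the argument the paper intends: the corollary is stated as an immediate consequence of Lemma \ref{l.ex2stfs} and Proposition \ref{pr.tempsseq}, obtained by passing to the computable tempered subsequence and noting that the two-sided F{\o}lner property is preserved under taking subsequences. Your remark that Proposition \ref{pr.tempsseq} only needs the left F{\o}lner property while the right one carries over trivially is the only point worth checking, and you handle it correctly.
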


The following result tells us that the $\limsup$ of averages of
bounded functions on an amenable group is translation-invariant.
\begin{lemma}[Limsup invariance]
\label{l.lsinv1}
Let $\Gamma$ be an amenable group with a right F{\o}lner sequence $(F_n)_{n
  \geq 1}$ and $f\in \bell{\infty}(\Gamma, \R)$ be a
bounded function on $\Gamma$. Then
\[
\limsup\limits_{n \to \infty} \avg{g \in F_n} f(g) = \limsup\limits_{n
  \to \infty} \avg{g \in F_n} f(g h).
\]
\end{lemma}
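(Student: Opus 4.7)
The plan is to show that the two sequences of averages differ by a quantity that tends to $0$, from which equality of the $\limsup$s follows immediately. The only ingredient needed is the defining right F{\o}lner property, together with the boundedness of $f$.

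First, I would rewrite the shifted average as an average of $f$ over the translated set:
\[
\avg{g \in F_n} f(gh) = \frac{1}{\cntm{F_n}} \sum_{g \in F_n} f(gh) = \frac{1}{\cntm{F_n h}} \sum_{g' \in F_n h} f(g'),
\]
using that $g \mapsto gh$ is a bijection from $F_n$ onto $F_n h$, so in particular $\cntm{F_n h} = \cntm{F_n}$. Hence
\[
\avg{g \in F_n} f(gh) - \avg{g \in F_n} f(g) = \frac{1}{\cntm{F_n}}\left( \sum_{g \in F_n h} f(g) - \sum_{g \in F_n} f(g) \right).
\]

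Next I would estimate the right-hand side. Terms indexed by $F_n \cap F_n h$ cancel, so the remaining sum is supported on $F_n \sdif F_n h$. Using $\abs{f(g)} \leq \norm[\infty]{f}$ we get
\[
\left| \avg{g \in F_n} f(gh) - \avg{g \in F_n} f(g) \right| \leq \norm[\infty]{f} \cdot \frac{\cntm{F_n \sdif F_n h}}{\cntm{F_n}}.
\]
By the right F{\o}lner property applied to the element $h$, this upper bound tends to $0$ as $n \to \infty$.

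Finally, since two bounded real sequences whose difference converges to $0$ have the same $\limsup$, we conclude
\[
\limsup_{n \to \infty} \avg{g \in F_n} f(g h) = \limsup_{n \to \infty} \avg{g \in F_n} f(g),
\]
which completes the proof. There is no real obstacle here; the only thing to be careful about is the direction of translation in relation to the right F{\o}lner property, which is precisely why the assumption is that $(F_n)_{n \geq 1}$ is a \emph{right} F{\o}lner sequence (so that $\cntm{F_n \sdif F_n h}/\cntm{F_n} \to 0$), and this is exactly what the argument uses.
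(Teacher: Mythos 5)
Your proof is correct and follows essentially the same route as the paper: both compare the two sums term by term, observe that the difference is supported on $F_n \sdif F_n h$ (the paper writes the equivalent bound $2\cntm{F_n \setminus F_n h}\,\norm[\infty]{f}/\cntm{F_n}$, which equals yours since $\cntm{F_n h} = \cntm{F_n}$), and invoke the right F{\o}lner property. No issues.
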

\begin{proof}
A direct computation shows that for all $n \geq 1$
\[
\frac{1}{\cntm{F_n}}\cntm{\sum\limits_{g \in F_n} f(g) -
  \sum\limits_{g \in F_n h} f(g)} \leq \frac{2 \cntm{F_n \setminus F_n
  h} \cdot \| f \|_{\infty}}{\cntm{F_n}},
\]
and the statement of the lemma follows since $(F_n)_{n \geq 1}$ is a
right F{\o}lner sequence. 
\end{proof}

\begin{rem}
\label{r.lsnotinv}
The statement of Lemma \ref{l.lsinv1} does not hold for general
amenable groups and unbounded nonnegative functions. As a
counterexample, take $\Gamma:=\Z$ with the tempered two-sided
F{\o}lner sequence
\[
F_n:=[-2^n,\dots,2^n] \quad \text{ for } n \geq 1
\]
and define $f: \Gamma \to \N$ to be zero everywhere, except for points
of the form $2^k+1$, where we let
\[
f(2^k+1):=2^k \quad \text{ for all } k \geq 0.
\]
It is then easy to see that
\[
\limsup\limits_{n \to \infty} \avg{g \in F_n} f(g) \neq \limsup\limits_{n
  \to \infty} \avg{g \in F_n} f(g + 1).
\]
We will resolve this issue in the class of groups of polynomial
growth in Lemma \ref{l.lsinv2} in Section \ref{ss.cgpg}.
\end{rem}

\subsection{Computable Groups of Polynomial Growth}
\label{ss.cgpg}
Let $\Gamma$ be a finitely generated discrete group and $\{
\gamma_1,\dots,\gamma_k\}$ be a fixed generating set. Each element
$\gamma \in \Gamma$ can be written as a product
$\gamma_{i_1}^{p_1} \gamma_{i_2}^{p_2} \dots \gamma_{i_l}^{p_l}$ for
some indexes $i_1,i_2,\dots,i_l \in \{ 1,\dots,k \}$ and some integers $p_1,p_2,\dots,p_l \in \Z$. We define the \textbf{norm} of an element $\gamma
\in \Gamma$ by
\[
\| \gamma \|:=\inf\{ \sum\limits_{i=1}^l |p_i|: \gamma =
\gamma_{i_1}^{p_1} \gamma_{i_2}^{p_2} \dots \gamma_{i_l}^{p_l} \},
\]
where the infinum is taken over all representations of $\gamma$ as a
product of the generating elements. The norm $\| \cdot \|$ on $\Gamma$
can, in general, depend on the generating
set, but it is easy to show \cite[Corollary 6.4.2]{ceccherini2010} that
two different generating sets produce equivalent norms. We will always
say what generating set is used in the definition of a norm, but we will
omit an explicit reference to the generating set later on. 

We say that the group $\Gamma$ is of
\textbf{polynomial growth} if there are constants $C,d>0$ such that
for all $n \geq 1$ we have
\[
\cntm{\bl(n)} \leq C n^d.
\]
\begin{exa}
\label{ex.zdex}
Consider the group $\Z^d$ for $d \in \N$ and let $\gamma_1,\dots,\gamma_d \in \Z^d$ be the
standard basis elements of $\Z^d$. That is, $\gamma_i$ is defined by
\[
\gamma_i(j):=\delta_i^j \quad (j=1,\dots, d)
\] 
for all $i=1,\dots,d$. We consider the generating set given by elements $\sum\limits_{k \in I} (-1)^{\varepsilon_k}\gamma_k$ for all
subsets $I \subseteq [1,d]$ and all functions $\varepsilon_{\cdot} \in
\{ 0,1\}^I$. Then it is easy
to see by induction on dimension that $\bl(n) = [-n,\dots,n]^d$, hence
\[
\cntm{\bl(n)} = (2n+1)^d \quad \text{ for all } n \in \N
\]
with respect to this generating set, i.e., $\Z^d$ is a group of polynomial growth.
\end{exa}

Let $d \in \Zp$. We say that the group $\Gamma$ has \textbf{polynomial growth
of degree $d$} if there is a constant $C>0$ such that
\[
\frac 1 C n^d \leq \cntm{\bl(n)} \leq C n^d \quad \text{ for all } n \in \N.
\]
It was shown in \cite{bass1972} that, if $\Gamma$ is a finitely
generated nilpotent group, then $\Gamma$ has polynomial growth of some
degree $d \in \Zp$. Furthermore, one can show \cite[Proposition
6.6.6]{ceccherini2010} that if $\Gamma$ is a group and $\Gamma' \leq \Gamma$
is a finite index, finitely generated nilpotent subgroup, having
polynomial growth of degree $d \in \Zp$, then the group $\Gamma$ has
polynomial growth of degree $d$. The
converse is true as well: it was proved in \cite{gromov1981}
that, if $\Gamma$ is a group of polynomial growth, then there exists a
finite index, finitely generated nilpotent subgroup $\Gamma' \leq
\Gamma$. It follows that if $\Gamma$ is a group of polynomial growth
with the growth function $\gamma$, then there is a constant $C>0$ and
an integer $d\in \Zp$, called the \textbf{degree of polynomial growth}, such that
\[
\frac 1 C n^d \leq \cntm{\bl(n)} \leq C n^d \quad \text{ for all } n \in \N.
\]
An even stronger result was obtained in \cite{pansu1983}, where it is
shown that, if $\Gamma$ is a group of polynomial growth of degree $d
\in \Zp$, then the limit
\begin{equation}
\label{eq.pansu}
c_{\Gamma}:=\lim\limits_{n \to \infty} \frac{\cntm{\bl(n)}}{n^d}
\end{equation}
exists. 

\begin{lemma}
\label{l.gpgamen}
Let $\Gamma$ be a group of polynomial growth. Then $(\bl(n))_{n \geq
1}$ is a tempered two-sided F{\o}lner sequence in $\Gamma$.
\end{lemma}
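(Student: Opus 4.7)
The plan is to verify the two-sided F{\o}lner property and the tempered condition separately; both reduce to the triangle inequality for the word norm, the symmetry of balls, and Pansu's asymptotic \eqref{eq.pansu}.

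First I would record two elementary properties of the word norm. Reversing a product $\gamma_{i_1}^{p_1} \cdots \gamma_{i_l}^{p_l}$ of generators with integer exponents preserves the sum $\sum |p_i|$, so $\| g^{-1}\| = \|g\|$ and consequently $\bl(n)^{-1} = \bl(n)$, i.e.\ every ball is symmetric. Concatenating representations gives the triangle inequality $\|gh\| \leq \|g\| + \|h\|$, hence the product containment $\bl(m) \bl(n) \subseteq \bl(m+n)$.

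For the tempered condition, using symmetry and the product containment together with the two-sided polynomial bound $\tfrac{1}{C} n^d \leq |\bl(n)| \leq C n^d$, one gets, for every $j$,
\[
\Bigl|\bigcup_{i<j} \bl(i)^{-1} \bl(j)\Bigr| \leq |\bl(2j)| \leq C(2j)^d \leq C^2 2^d |\bl(j)|,
\]
so $(\bl(n))_{n \geq 1}$ is $C^2 2^d$-tempered.

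For the two-sided F{\o}lner property, fix $g \in \Gamma$ and set $k := \|g\|$. For all $n > k$ the triangle inequality gives both $g \bl(n) \subseteq \bl(n+k)$ and $\bl(n-k) \subseteq g \bl(n)$, the second because any $h \in \bl(n-k)$ equals $g \cdot (g^{-1} h)$ with $\|g^{-1} h\| \leq n$. The same inclusions hold for $\bl(n) g$ by the symmetric argument. Therefore
\[
\frac{|g \bl(n) \sdif \bl(n)|}{|\bl(n)|} \leq \frac{|\bl(n+k)| - |\bl(n-k)|}{|\bl(n)|},
\]
and likewise for $\bl(n) g \sdif \bl(n)$. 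By Pansu's theorem \eqref{eq.pansu}, $|\bl(n)|/n^d \to c_\Gamma > 0$, so $|\bl(n \pm k)|/|\bl(n)| \to 1$ and the right-hand side tends to zero as $n \to \infty$. Combined with the tempered estimate above, this proves the lemma. The only mildly delicate point is the use of Pansu's asymptotic to handle the ratio of nearby ball cardinalities; the weaker two-sided bound $\tfrac{1}{C} n^d \leq |\bl(n)| \leq C n^d$ alone does not immediately suffice, so Pansu's result is where the argument actually bites.
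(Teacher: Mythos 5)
Your proof is correct and follows essentially the same route as the paper: ball containments from the triangle inequality plus Pansu's asymptotic \eqref{eq.pansu} for the two-sided F{\o}lner property, and the doubling bound $|\bl(2j)|\leq C^2 2^d\,|\bl(j)|$ for temperedness. If anything, your version is slightly tidier than the paper's (the sandwich $\bl(n-k)\subseteq g\bl(n)\subseteq\bl(n+k)$ cleanly bounds the symmetric difference, and you keep track of the $2^d$ in the tempered constant), and your closing remark that the crude two-sided growth bound alone would not give the F{\o}lner property is exactly the point where Pansu's theorem is needed.
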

\begin{proof}
We want to show that for every $g \in \Gamma$
\[
\lim\limits_{n \to \infty} \frac{\cntm{g \bl(n) \sdif
    \bl(n)}}{\cntm{\bl(n)}} = 0.
\]
Let $m:=\| g \| \in \Zp$. Then $g \bl(n) \subseteq \bl(n+m)$, hence
\[
\frac{\cntm{g \bl(n) \sdif
    \bl(n)}}{\cntm{\bl(n)}} \leq \frac{\cntm{\bl(n+m)} - \cntm{ \bl(n)}} {\cntm{\bl(n)}} \to 0,
\]
where we use the existence of the limit in Equation
\eqref{eq.pansu}. Similarly, we use the relation 
$\bl(n) g \subseteq \bl(n+m)$ to show that $(\bl(n))_{n \geq 1}$ is a
right F{\o}lner sequence. The sequence $(\bl(n))_{n \geq 1}$ is
tempered, since
\[
\cntm{\bl(n-1) \cdot \bl(n)} \leq \cntm{\bl(2n)} \leq C^2 \cntm{\bl(n)}
\]
for all $n \geq 1$.
\end{proof}

As promised in Remark \ref{r.lsnotinv}, we prove now that the
$\limsup$ of averages of \emph{arbitrary} nonnegative functions on a
group of polynomial growth $\Gamma$ is
translation invariant.
\begin{lemma}[Limsup invariance]
\label{l.lsinv2}
Let $\Gamma$ be a group of polynomial growth and define the F{\o}lner
sequence of balls around $\ue \in \Gamma$ by
\[
F_n:= \{ g \in \Gamma: \| g \| \leq n\} \quad \text{for } n \geq 1.
\]
Let $f: \Gamma \to \Rp$ be a
nonnegative function on $\Gamma$. Then
\[
\limsup\limits_{n \to \infty} \avg{g \in F_n} f(g) = \limsup\limits_{n
  \to \infty} \avg{g \in F_n} f(g h)
\]
for all $h \in \Gamma$.
\end{lemma}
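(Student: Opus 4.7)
The plan is to exploit the triangle inequality for the word norm together with Pansu's limit from Equation \eqref{eq.pansu} to sandwich the translated ball $F_n h$ between two concentric balls of nearby radii and then observe that, in the polynomial-growth regime, these nearby balls have asymptotically identical cardinality. Let $h \in \Gamma$ and set $m := \| h \|$.

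First I would establish the inclusions
\[
F_{n-m} \subseteq F_n h \subseteq F_{n+m} \qquad (n \geq m).
\]
The right inclusion is immediate from $\| gh \| \leq \| g \| + \| h \| \leq n+m$. For the left, given $x \in F_{n-m}$ one sets $g := x h^{-1}$ and notes that $\| g \| \leq (n-m)+m = n$, so $g \in F_n$ and $x = g h \in F_n h$. Since $f \geq 0$, summing $f$ over these nested sets yields
\[
\sum_{g \in F_{n-m}} f(g) \;\leq\; \sum_{g \in F_n h} f(g) \;=\; \sum_{g \in F_n} f(gh) \;\leq\; \sum_{g \in F_{n+m}} f(g),
\]
and dividing through by $\cntm{F_n}$ gives
\[
\frac{\cntm{F_{n-m}}}{\cntm{F_n}}\, \avg{g \in F_{n-m}} f(g) \;\leq\; \avg{g \in F_n} f(gh) \;\leq\; \frac{\cntm{F_{n+m}}}{\cntm{F_n}}\, \avg{g \in F_{n+m}} f(g).
\]

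Now I would invoke Pansu's theorem (Equation \eqref{eq.pansu}): since $\cntm{F_k}/k^d \to c_\Gamma$ with $c_\Gamma > 0$, one has $\cntm{F_{n\pm m}}/\cntm{F_n} \to 1$ as $n \to \infty$. A routine fact about $\limsup$ (namely $\limsup a_n b_n = \limsup b_n$ whenever $a_n \to 1$ and $b_n \geq 0$, including the case $\limsup b_n = \infty$) removes these prefactors. Reindexing the outer $\limsup$'s by $n \mapsto n \mp m$ then collapses both outer quantities to $\limsup_n \avg{g \in F_n} f(g)$, yielding
\[
\limsup_{n \to \infty} \avg{g \in F_n} f(g) \;\leq\; \limsup_{n \to \infty} \avg{g \in F_n} f(gh) \;\leq\; \limsup_{n \to \infty} \avg{g \in F_n} f(g),
\]
which is the desired equality.

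There is no serious obstacle here: the entire argument rests on the ratio $\cntm{F_{n+m}}/\cntm{F_n} \to 1$, which is precisely the feature that distinguishes polynomial growth from the counterexample of Remark \ref{r.lsnotinv}, where the exponentially growing sets $[-2^n, 2^n]$ satisfy $\cntm{F_{n+1}}/\cntm{F_n} \to 2$ and hence the boundary layer between $F_n$ and $F_{n+1}$ can carry a fixed fraction of the total mass. Nonnegativity of $f$ is essential so that the inclusions translate into sum inequalities without cancellation.
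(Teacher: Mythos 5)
Your proof is correct and rests on the same mechanism as the paper's: translating a ball lands it inside a ball of nearby radius, and Pansu's limit forces the cardinality ratio of nearby balls to tend to $1$, so the prefactors wash out of the $\limsup$. The only (cosmetic) difference is that you sandwich $F_n h$ between $F_{n-m}$ and $F_{n+m}$ for arbitrary $h$ directly, whereas the paper first reduces to $h$ in the symmetric generating set and uses the single inclusion $F_n h \subseteq F_{n+1}$ together with a symmetry argument.
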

\begin{proof}
Let $S \subset \Gamma$ be the finite generating set, which is used in
the definition of the norm $\| \cdot \|$ on $\Gamma$. Since the
statement of the lemma is `symmetric' and since the set $S$ generates $\Gamma$, it suffices to prove that
\[
\limsup\limits_{n \to \infty} \avg{g \in F_n} f(g) \geq \limsup\limits_{n
  \to \infty} \avg{g \in F_n} f(g h)
\]
for all $h \in S \cup S^{-1}$. We fix an element $h \in S \cup
S^{-1}$. It is clear that $F_n h \subseteq F_{n+1}$, hence
\[
\limsup\limits_{n
  \to \infty} \avg{g \in F_n} f(g h) \leq \limsup\limits_{n \to
  \infty} \frac{1}{\cntm{F_n}} \sum\limits_{g \in F_{n+1}} f(g).
\]
But
\[
\limsup\limits_{n \to
  \infty} \frac{1}{\cntm{F_n}} \sum\limits_{g \in F_{n+1}} f(g) = \limsup\limits_{n \to
  \infty} \frac{\cntm{F_{n+1}}}{\cntm{F_n}} \cdot \avg{g \in F_{n+1}} f(g)
\]
and $\cntm{F_{n+1}}/\cntm{F_n} \to 1$ as $n \to \infty$, which implies that
\[
\limsup\limits_{n \to
  \infty} \frac{1}{\cntm{F_n}} \sum\limits_{g \in F_{n+1}} f(g) = \limsup\limits_{n \to \infty} \avg{g \in F_n} f(g),
\]
and the proof is complete.
\end{proof}

A computable group $\Gamma$ with a distinguished set of
generators $\{ \gamma_1,\dots, \gamma_k\}$ will be called a \textbf{computable
group of polynomial growth} if $\Gamma$ is a group of polynomial
growth. It will be essential further that the generating set is \emph{known
and fixed}. More precisely, we state the following lemma.
\begin{lemma}
Let $\Gamma$ be a computable group of polynomial growth
with a distinguished set of generators $\{ \gamma_1,\dots,
\gamma_k\}$. Then the following assertions hold:
\begin{aufzi}
\item The sequence of balls $(\bl(n))_{n \geq 1}$ is a canonically
  computable sequence of finite sets;
\item The growth function $n \mapsto \cntm{\bl(n)}, \Zp \to \N$ is a total computable function;
\item The norm $\| \cdot \|: \Gamma \to \Zp$ is a total computable function.
\end{aufzi}
\end{lemma}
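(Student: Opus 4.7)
The plan is to prove the three assertions in the order (a), (b), (c), obtaining (b) and (c) as easy consequences of (a). The key observation is that the norm $\|\cdot\|$ agrees with the word length with respect to the symmetric generating set $S := \{\gamma_1,\dots,\gamma_k\} \cup \{\gamma_1^{-1},\dots,\gamma_k^{-1}\}$: an element $g \in \Gamma$ satisfies $\|g\| \leq n$ if and only if $g$ can be written as a product of at most $n$ elements of $S$ (with the convention that the empty product yields $\ue$, so that $\|\ue\| = 0$). Indeed, a representation $g = \gamma_{i_1}^{p_1}\cdots \gamma_{i_l}^{p_l}$ with $\sum |p_j| = m$ corresponds directly to a word of length $m$ in the letters of $S$, and vice versa.

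For (a), I would fix $n$ and enumerate the finite set $W_n$ of all words $(s_1,\dots,s_m)$ with $s_j \in S$ and $m \leq n$; its cardinality is bounded by $\sum_{m=0}^{n}(2k)^m$, so the enumeration halts. For each such word we compute the product $s_1 s_2 \cdots s_m \in \Gamma$ using the computable group operation $\circ$, and collect the resulting elements into a finite subset of $\N$. By the observation above, the resulting set is exactly $\bl(n)$. Since all operations involved (enumeration of words, composition, equality on $\N$) are computable, the canonical index $\cin{\bl(n)}$ is a total computable function of $n$, which is (a).

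Statement (b) is immediate from (a): given $\cin{\bl(n)}$, the cardinality $\cntm{\bl(n)}$ is obtained by counting the number of binary digits equal to $1$ in the canonical index, which is computable. For (c), given $g \in \Gamma$, I would search for the smallest $n \in \Zp$ such that $g \in \bl(n)$; the membership test is decidable because $\bl(n)$ is produced explicitly by (a) as a finite subset of the computable set $\Gamma \subseteq \N$, and equality in $\N$ is decidable. This search terminates for every $g$ because $\Gamma$ is generated by $S$, so $g$ has some finite norm; the first $n$ where the test succeeds is exactly $\|g\|$.

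The only conceptual point that requires care is matching the definition of the norm (an infimum over representations as products of powers of the generators) with the word length over $S \cup S^{-1}$; this is handled in the opening paragraph. No step presents a real obstacle, as everything reduces to finite enumerations and the computability of $\circ$ on the computable subset $\Gamma \subseteq \N$.
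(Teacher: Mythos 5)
Your proposal is correct, and it is exactly the straightforward argument the paper has in mind (the paper omits the proof entirely, remarking only that it is straightforward): identify the norm with word length over the symmetric generating set, enumerate all words of length at most $n$ to produce $\bl(n)$ explicitly, and derive (b) and (c) from (a) by counting and by bounded search, using the computability of $\circ$ and of inversion in a computable group.
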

\noindent The proof of the lemma is straightforward.

\subsection{Ergodic Theory}
\label{ss.et}
Let $\prX=(X,\calB,\mu)$ be a probability space. A measurable transformation
$\varphi: X \to X$ is called \textbf{measure-preserving} if
\[
\mu(\varphi^{-1} A) = \mu(A) \quad \text{ for all } A \in \calB.
\]
A measure-preserving transformation $\varphi: X \to X$ is called an
\textbf{automorphism} if there exists a measure-preserving
transformation $\psi: X \to X$ such that
\[
\varphi \circ \psi = \psi \circ \varphi = \mathrm{id}_X \quad \mu-\text{a.e.}
\]
We denote by $\Aut(\prX)$ the group of all automorphisms of the
probability space $\prX$. Given a discrete group $\Gamma$, a
\textbf{measure-preserving $\Gamma$-system}\footnote{To simplify the
  notation, the shorter term `$\Gamma$-system' will also be used.} is a probability space $\prX=(X,\calB,\mu)$, endowed with
an action of $\Gamma$ on $X$ by automorphisms from $\Aut(\prX)$. We denote a
measure-preserving $\Gamma$-system on a probability space
$(X,\calB,\mu)$ by a triple $(X,\mu,\Gamma)$ and we write $g \cdot x$,
where $g \in \Gamma, x\in X$, to denote the corresponding action of
$\Gamma$ on elements of $X$.

Let $\bfX = (X,\mu, \Gamma)$ be a measure-preserving
$\Gamma$-system on a probability space $(X,\calB,\mu)$. We say that $\bfX$ is \textbf{ergodic} (or that the
measure $\mu$ on $X$ is ergodic) if, for all $A \in \calB$, the
condition 
\[
\mu(\gamma^{-1} A \sdif A) = 0\quad \text{ for all } \gamma \in \Gamma
\]
implies that $\mu(A)=0$ or $\mu(A)=1$. That is, $\bfX$ is ergodic if only the
trivial sets are essentially invariant under $\Gamma$. 

The simplest ergodic theorem for amenable group actions is the mean
ergodic theorem, which we state below. For the proof we refer the reader to \cite[Theorem 3.33]{glasner}.

\begin{thm}
\label{t.meanerg}
Let $(X, \mu ,\Gamma)$ be a measure-preserving, ergodic $\Gamma$-system,
where the group $\Gamma$ is amenable and $(F_n)_{n \geq 1}$ is a left
F{\o}lner sequence. Then for every $f \in \Ell{2}(X)$ we have
\begin{equation*}
\lim\limits_{n \to \infty} \avg{g \in F_n} f \circ g = \int f d\mu,
\end{equation*}
where the convergence is understood in $\Ell{2}(X)$-sense. 
\end{thm}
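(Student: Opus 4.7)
The plan is to use the Koopman operators and the standard Koopman--von Neumann decomposition. Define the isometries $U_g: \Ell{2}(X) \to \Ell{2}(X)$ by $U_g f := f \circ g$; since each $g$ acts by a measure-preserving automorphism, each $U_g$ is unitary with $U_g^\ast = U_{g^{-1}}$, and the composition law $U_g U_h = U_{hg}$ gives an anti-representation of $\Gamma$ on $\Ell{2}(X)$. Let $I := \{f \in \Ell{2}(X): U_g f = f \text{ for all } g \in \Gamma\}$ be the closed subspace of invariants, and $P_I$ the orthogonal projection onto it. The ergodicity hypothesis forces every $f \in I$ to be $\mu$-a.e.\ constant, so $I = \C \cdot \car$ and $P_I f = \int f\, d\mu$.

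I would then show that the averages $A_n f := \avg{g \in F_n} U_g f$ converge to $P_I f$ in $\Ell{2}$ by checking the claim separately on $I$ and $I^\perp$; since $\|A_n\| \leq 1$, it suffices to verify convergence on a dense subspace of each summand. On $I$ the convergence is immediate, as $A_n f = f$ for invariant $f$. For $I^\perp$ I would invoke the well-known identity that the closed linear span $B$ of the coboundaries $\{h - U_k h : h \in \Ell{2}(X),\, k \in \Gamma\}$ equals $I^\perp$: by a direct orthogonality computation, $f \perp B$ iff $\dprod{f}{h} = \dprod{U_k^\ast f}{h}$ for all $h, k$, i.e., iff $U_{k^{-1}} f = f$ for all $k$, which is exactly $f \in I$.

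It then remains to show that $A_n(h - U_k h) \to 0$ in $\Ell{2}$ for every fixed $h$ and $k$. A direct computation, using $U_g U_k = U_{kg}$ and the substitution $g' := kg$, yields
\[
A_n h - A_n(U_k h) = \frac{1}{\cntm{F_n}}\left(\sum_{g \in F_n} U_g h \,-\, \sum_{g' \in kF_n} U_{g'} h\right),
\]
whose $\Ell{2}$-norm is bounded by $\cntm{F_n \sdif kF_n}\cdot \|h\|_2 / \cntm{F_n}$, tending to zero by the left F{\o}lner property. The main subtlety in the whole argument is keeping the convention of the action consistent, so that the translation appearing after the substitution is $g \mapsto kg$ rather than $g \mapsto gk$; with the anti-representation convention $U_g U_h = U_{hg}$ this matches the \emph{left} F{\o}lner hypothesis precisely, and no additional structure on $(F_n)_{n\geq 1}$ (such as temperedness) is needed.
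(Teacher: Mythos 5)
Your proof is correct and is the standard von Neumann/Koopman--von Neumann argument (splitting $\Ell{2}$ into invariants and the closed span of coboundaries, with the left F{\o}lner property killing the coboundary part); the paper itself gives no proof and simply cites Glasner, where essentially this same argument appears. The bookkeeping with the anti-representation $U_gU_h=U_{hg}$ and the resulting need for the \emph{left} F{\o}lner condition is handled correctly.
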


Pointwise convergence of ergodic averages is much more tricky, in
particular, pointwise ergodic averages do not necessarily converge,
unless the F{\o}lner sequence satisfies some additional assumptions. The
following important theorem was proved by E. Lindenstrauss in
\cite{lindenstrauss2001}\footnote{In fact, a more general statement is proved there, but we only need the ergodic case in this work.}.
\begin{thm}
\label{t.pointwiseet}
Let $\bfX=(\prX,\mu,\Gamma)$ be an ergodic measure-preserving $\Gamma$-system, where the group $\Gamma$ is amenable and $(F_n)_{n \geq 1}$ is a tempered left F{\o}lner sequence. Then for every $f \in \Ell{1}{(X)}$
\begin{equation*}
\lim\limits_{n \to \infty} \avg{g \in F_n} f(g \cdot x) = \int f d\mu
\end{equation*}
for $\mu$-a.e. $x \in X$.
\end{thm}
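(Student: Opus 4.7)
The plan is to follow Lindenstrauss's original argument \cite{lindenstrauss2001}, which combines a tempered maximal inequality with a density argument based on Theorem \ref{t.meanerg}.

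First I would prove a combinatorial covering lemma tailored to the tempered condition on $(F_n)_{n \geq 1}$. Roughly speaking, given a finite family of indices $n_1 < \dots < n_k$ and ``marked'' subsets $A_i \subset F_{n_i}$ of prescribed relative density, one can extract a sub-family whose corresponding translates cover a definite fraction of their union while remaining quantitatively $\delta$-disjoint. The proof is greedy and processes the indices in reverse order, using the bound $\cntm{\bigcup_{j<i} F_{n_j}^{-1} F_{n_i}} \leq C \cntm{F_{n_i}}$ to absorb the overlap created by the larger (earlier-selected) sets.

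Next I would transfer this covering lemma into a weak-type $(1,1)$ maximal inequality on $(X,\mu)$ for the ergodic maximal operator
\[
M^{\ast} f(x) := \sup\limits_{n \geq 1} \avg{g \in F_n} |f(g \cdot x)|.
\]
The transference applies the covering lemma pointwise to the set $\{ n : \avg{g \in F_n} |f|(g \cdot x) > \lambda\}$ at each $x$, then integrates against $\mu$ and uses the measure-preserving nature of the $\Gamma$-action to conclude $\mu\{ M^{\ast} f > \lambda\} \leq C' \| f \|_1/\lambda$.

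Finally I would upgrade to a.e. convergence on all of $\Ell{1}(X)$ via the Banach principle. On a dense subspace of $\Ell{2}(X)$ consisting of invariant functions together with coboundaries of the form $f - f \circ g_0$, pointwise a.e. convergence is immediate: invariants converge trivially by ergodicity, while coboundary averages differ from themselves by sums supported on $F_n \sdif F_n g_0$, whose density tends to zero by the F{\o}lner property. Theorem \ref{t.meanerg} certifies that this subspace is $\Ell{2}$-dense, and after truncation it is $\Ell{1}$-dense; combining density with the weak-$(1,1)$ maximal inequality, the standard oscillation argument of the Banach principle promotes a.e. convergence to all of $\Ell{1}(X)$. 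The main obstacle is the covering lemma: Vitali-type selection arguments available on $\R^d$ or $\Z^d$ fail on a general amenable group, and the tempered assumption is precisely what makes the greedy reverse-induction construction work; once this step is in place, the transference and Banach-principle parts are essentially routine.
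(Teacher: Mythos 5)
The paper does not prove this statement at all: it is Lindenstrauss's pointwise ergodic theorem, imported with a reference to \cite{lindenstrauss2001}, so the only meaningful comparison is with the cited proof. Your outline reproduces its architecture faithfully — a covering lemma for tempered F{\o}lner sequences, transference to a weak-type $(1,1)$ maximal inequality, and the Banach principle applied to the dense subspace of invariant functions plus bounded coboundaries (whose $\Ell{2}$-density is really the elementary orthogonality computation \emph{behind} Theorem \ref{t.meanerg} rather than a consequence of it, though either reading is harmless). One orientation slip: for the left action $g \cdot x$ and a \emph{left} F{\o}lner sequence, the coboundary averages are supported on $g_0 F_n \sdif F_n$, not $F_n \sdif F_n g_0$; since the theorem assumes only left F{\o}lnerness, you should use the left-handed symmetric difference.

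The genuine weak point is the covering lemma, exactly where you locate the difficulty — but the greedy mechanism you propose for it does not work. A greedy maximal (or maximal $\delta$-disjoint) selection together with the tempered bound $\cntm{\bigcup_{j<i} F_{n_j}^{-1} F_{n_i}} \leq C \cntm{F_{n_i}}$ controls how many \emph{centers} a selected translate can block, but the transference needs either a pointwise bound on the covering multiplicity of the selected family or a lower bound on the fraction of the union it covers, and neither follows. Already at a single scale, an unselected translate $F_n y$ meeting a selected $F_n x$ only gives $y \in F_n^{-1} F_n x$, and $\cntm{F_n^{-1} F_n}/\cntm{F_n}$ is unbounded for general F{\o}lner sets; there is no analogue of the ``triple the ball'' enlargement from the Euclidean Vitali argument, and temperedness says nothing about $F_{n_i}^{-1}F_{n_i}$. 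This is precisely why the covering lemma in \cite{lindenstrauss2001} is proved \emph{probabilistically}: processing the scales from largest to smallest, candidate centers are selected independently at random with a probability calibrated to $\cntm{F_{n_i}}$, and a second-moment estimate — this is where temperedness enters — shows that the random family covers a definite fraction of the union in expectation, whence a suitable deterministic selection exists. If you intend a purely deterministic greedy construction you would be proving something new and would need to justify it in detail; otherwise that sentence should be replaced by the randomized selection argument.
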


\subsection{Computability on Cantor Space and Martin-L\"{o}f Randomness}
\label{ss.ccs}

In this section we remind the reader some standard notions of
computability on Cantor space. All of these notions have analogs on
computable metric spaces as well, and we refer to
\cite{hoyrup2009}, \cite{galatolo2010} for the details.

Throughout the article we fix some enumeration of $\Q = \{
q_1,q_2,q_2,\dots\}$. We use the standard notions of a computable real
number and of a lower/upper
semi-computable real number. A sequence of real numbers $(a_n)_{n \geq 1}$ is
called \textbf{computable uniformly in $n$} if there exists an
algorithm $A: \N \times \N \to \Q$ such that
\[
|A(n,i)-a_n|<2^{-i} \quad \text{ for all } n,i \geq 1.
\]

We fix some enumeration 
\[
\{ 0,1\}^{\ast} = \{ \uw_1,\uw_2,\dots\}
\]
of the set if all finite binary words, and let
\[
[\uw]:=\{ \omega: \omega = \uw \omega' \text{ for some } \omega' \in \{ 0,1\}^{\N}\} \subset \{ 0,1\}^{\N}
\]
be the cylinder set of all words that begin with a finite word $\uw \in \{ 0,1\}^{\ast}$.
A set $U \subseteq \{ 0,1\}^{\N}$ is called \textbf{effectively open} if there is a recursively enumerable subset $E \subseteq \N$ such that
\[
U = \bigcup\limits_{i \in E} [\uw_i].
\]
A sequence $(U_n)_{n \geq 1}$ of sets is called a \textbf{uniformly effectively open} sequence of sets if there is a recursively enumerable set $E \subseteq \N \times \N$ such that
\[
U_i = \bigcup\limits_{(i,j) \in E} [\uw_j] \quad \text{ for all } i \geq 1.
\]

A mapping $\varphi: \{ 0,1\}^{\N} \to \{ 0,1\}^{\N}$ is called \textbf{computable} if $(\varphi^{-1}([\uw_i]))_{i\geq 1}$ is uniformly effectively open, that is, there is a recursively enumerable set $E_{\varphi} \subseteq \N\times \N$ such that
\[
\varphi^{-1}([\uw_i]) = \bigcup\limits_{(i,j) \in E_{\varphi}} [\uw_j] \quad \text{ for all } i \geq 1.
\]
A function $f: \{ 0,1\}^{\N} \to \Rp$ is called \textbf{lower
  semicomputable} if the sequence of sets $(f^{-1}((q_n,+\infty)))_{n
  \geq 1}$ is uniformly effectively open.

Let $\mu$ be a Borel probability measure on $\{ 0,1\}^{\N}$. We say that $\mu$ is a \textbf{computable measure} if
\[
\mu([\uw_{i_1}] \cup [\uw_{i_2}] \cup \dots \cup [\uw_{i_k}])
\]
is computable uniformly in $i_1,\dots,i_k \geq 1$.

Suppose that $\mu$ is a computable probability measure on $\{ 0,1\}^{\N}$. A \textbf{Martin-L\"{o}f $\mu$-test} is a uniformly effectively open sequence of sets $(U_n)_{n \geq 1}$ such that
\[
\mu(U_n) < 2^{-n} \quad \text{ for all } n \geq 1.
\]
Any subset of $\bigcap\limits_{n \geq 1} U_n$ is called an \textbf{effectively $\mu$-null set}. A point $\omega \in \{ 0,1\}^{\N}$ is called \textbf{Martin-L\"{o}f random} if it is not contained in any effectively $\mu$-null set.

\subsection{Computable Dynamical Systems}
\label{ss.cps}

Now, let $\Gamma \subseteq \N$ be a computable group, which acts on $\{ 0,1\}^{\N}$ by homeomorphisms. We say that the action of $\Gamma$ is \textbf{computable} if there is a recursively enumerable subset $E \subseteq \Gamma \times \N \times \N$ such that
\[
\gamma^{-1}([\uw_i]) = \bigcup\limits_{(\gamma,i,j) \in E} [\uw_j] \quad \text{ for all } i \geq 1, \gamma \in \Gamma.
\]

In general, checking the computability of the action of a computable
group $\Gamma$ on $\{ 0,1 \}^{\N}$ can be trickier than checking
computability of a single transformation. Imagine a $\Z$-action on $\{
0,1\}^{\N}$ with the generating element $\varphi \in \Z$. Can it
happen that both $\varphi$ and $\varphi^{-1}$ are computable
transformations of $\{ 0,1\}^{\N}$, whilst the action of $\Z$ on $\{
0,1\}^{\N}$ is not computable? Fortunately, the answer is `no': the
following lemma tells us that for an action of a computable finitely generated group it suffices to check computability of transformations in a finite symmetric generating set to guarantee the computability of the action. The lemma also shows that the terminology of computable group actions which we suggest in this article is compatible with the classical case, when there is only one computable transformation.

\begin{lemma}
\label{l.fgactcomp}
Let $\Gamma$ be a finitely generated computable group with a finite symmetric generating set $S \subset \Gamma$. Suppose that $\Gamma$ acts on $\{ 0,1\}^{\N}$ by homeomorphisms, and, furthermore, that for each $\gamma \in S$ the transformation
\[
\gamma: \{ 0,1 \}^{\N} \to \{ 0,1 \}^{\N}
\]
is computable. Then the action of $\Gamma$ on $\{ 0,1\}^{\N}$ is computable.
\end{lemma}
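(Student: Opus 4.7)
The plan is to reduce the general case to computability of generators by combining two facts: first, that compositions of computable transformations of $\{0,1\}^{\N}$ remain computable in a uniform way, and second, that we can algorithmically produce a word in $S$ representing any given element of $\Gamma$.

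For the second point, since $\Gamma$ is a computable group, its multiplication $\circ:\Gamma\times\Gamma\to\Gamma$ is a total computable function. Given $\gamma\in\Gamma$, I would dovetail over all finite words $(s_1,\dots,s_k)\in S^{*}$ (which can be effectively enumerated as $S$ is a known finite subset of $\N$), compute their products $s_1\circ\cdots\circ s_k$ in $\Gamma$, and halt as soon as such a product equals $\gamma$. Because $S$ generates $\Gamma$ and, being symmetric, contains inverses of its elements, a representation exists, so the search terminates. This yields a total computable function $\gamma\mapsto w_\gamma$ assigning each group element an $S$-word representing it.

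For the first point, I would verify by induction on word length that each word $w=(s_1,\dots,s_k)\in S^{*}$ induces a computable map with preimages uniformly enumerable in $w$ and $i$. Concretely, if $\alpha,\beta:\{0,1\}^{\N}\to\{0,1\}^{\N}$ are computable with r.e.\ sets $E_\alpha,E_\beta$ witnessing computability in the sense of Section \ref{ss.cps}, then
\[
(\beta\after\alpha)^{-1}([\uw_i]) \;=\; \alpha^{-1}\Bigl(\bigcup_{(i,j)\in E_\beta}[\uw_j]\Bigr) \;=\; \bigcup_{\substack{(i,j)\in E_\beta\\ (j,k)\in E_\alpha}}[\uw_k],
\]
and the index set on the right is r.e.\ uniformly in $i$ and in the codes of $\alpha,\beta$, which here range over the finite set $S$. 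Iterating this construction along $w=(s_1,\dots,s_k)$ produces an r.e.\ set $E_w\subseteq\N\times\N$ with $w^{-1}([\uw_i])=\bigcup_{(i,k)\in E_w}[\uw_k]$, uniformly in $w$ and $i$.

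Combining the two ingredients, I would set $E:=\{(\gamma,i,k)\in\Gamma\times\N\times\N \st (i,k)\in E_{w_\gamma}\}$. Since $\gamma\mapsto w_\gamma$ is total computable and the assignment $w\mapsto E_w$ is uniformly r.e., the set $E$ is r.e.\ as a subset of $\Gamma\times\N\times\N$. Because $w_\gamma$ and $\gamma$ act as the same homeomorphism of $\{0,1\}^{\N}$, we obtain $\gamma^{-1}([\uw_i])=\bigcup_{(\gamma,i,k)\in E}[\uw_k]$, which is exactly the definition of a computable action. I do not anticipate a genuine obstacle: the symmetry of $S$ removes the only subtle point (inverses of generators are themselves given as computable maps), and everything else is a routine manipulation of r.e.\ enumerations under composition.
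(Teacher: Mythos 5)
Your proof is correct and takes essentially the same route as the paper's: express each group element as a word in the generators (using computability of $\circ$) and compose the r.e.\ preimage enumerations of the maps in $S$ along that word. The only difference is organizational — you search for one canonical word $w_\gamma$ per element, while the paper dovetails over all words of length at most $n$ at stage $n$ — and both yield the required r.e.\ set $E$.
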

\begin{proof}
Given a fixed finite symmetric generating set $S = \{ \gamma_1, \gamma_2, \dots, \gamma_N\}$, we will denote by $\bl(n)$ the corresponding balls around the neutral element $\ue \in \Gamma$ with respect to the norm determined by $S$. Since $\gamma_1,\gamma_2,\dots,\gamma_N$ are computable endomorphisms of $\{ 0,1\}^{\N}$, there are recursively enumerable subsets $E_1,E_2,\dots,E_N$ such that
\[
\gamma_k^{-1}([\uw_i]) = \bigcup\limits_{(i,j) \in E_k} [\uw_j] \quad \text{ for all } k=1,\dots,N, i \geq 1.
\]

We will describe an algorithm, which enumerates the set $E$. At \emph{stage} $n$, the algorithm first computes the finite set $\bl(n) \subset \Gamma$ by computing all products of the elements of $S$ of length at most $n$. For each word
\[
\gamma_{i_1} \gamma_{i_2} \dots \gamma_{i_k} = \gamma \in \bl(n)
\]
we have for all $i \geq 1$ 
\begin{align*}
\gamma^{-1}([\uw_i]) &= \gamma_{i_k}^{-1} \gamma_{i_{k-1}}^{-1}\dots \gamma_{i_1}^{-1}([\uw_i]) = \gamma_{i_k}^{-1} \gamma_{i_{k-1}}^{-1}\dots \gamma_{i_2}^{-1} \bigcup\limits_{(i,j_1) \in E_{i_1}} [\uw_{j_1}] = \\
&=\gamma_{i_k}^{-1} \gamma_{i_{k-1}}^{-1}\dots \gamma_{i_3}^{-1} \bigcup\limits_{(i,j_1) \in E_{i_1}} \bigcup\limits_{(j_1,j_2) \in E_{i_2}} [\uw_{j_2}] = \\
&=\bigcup\limits_{(i,j_1) \in E_{i_1}} \bigcup\limits_{(j_1,j_2) \in E_{i_2}} \bigcup\limits_{(j_2,j_3) \in E_{i_3}} \dots \bigcup\limits_{(j_{k-1},j_k) \in E_{i_k}}  [\uw_{j_k}].
\end{align*}
We compute the first $n$ pairs $(i,j_1) \in E_{i_1}$, for each of these pairs we compute the first $n$ pairs $(j_1,j_2) \in E_{i_2}$ and so on up to the first $n$ pairs $(j_{k-1},j_k) \in E_{i_k}$ (where $j_{k-1}$ comes from the one but the last step). The algorithm prints all resulting triples $(\gamma, i, j_k)$, and proceeds to the next word (or the next stage, if all words at the current stage have been exhausted).

Since, at each stage $n$, we look through \emph{all} products of length at most $n$, it is easy to see that
\[
\gamma^{-1}([\uw_i]) = \bigcup\limits_{(\gamma,i,j) \in E} [\uw_j]
\]
for all $i \geq 1$, and, furthermore, the set $E$ is recursively enumerable.
\end{proof}

A \textbf{computable Cantor $\Gamma$-system}\footnote{Or a
  \emph{computable $\Gamma$-system} for short, since we only consider
  dynamical systems on Cantor space in this article.} is a triple $(\{ 0,1\}^{\N},\mu,\Gamma)$,
where $\mu$ is a computable measure on $\{ 0,1\}^{\N}$ and $\Gamma$ acts computably on $\{ 0,1\}^{\N}$ by measure-preserving transformations.

\begin{rem}
\label{r.compact}
The notion of a computable action of a computable group which we
suggest directly translates to arbitrary computable metric
spaces. Furthermore, Lemma \ref{l.fgactcomp} remains valid in the
more general setting. 
\end{rem}

\section{Effective Birkhoff's Theorem}
\subsection{Ku\v{c}era's Theorem}
\label{ss.kt}

In this section we generalize Ku\v{c}era's theorem for computable
actions of amenable groups. In the proof we follow roughly the approach from
\cite{shen2012}, although the technical details do differ.
\begin{thm}
\label{t.kt}
Let $\Gamma$ be a computable amenable group and $(\{ 0,1\}^{\N},\mu,\Gamma)$ be a
computable ergodic $\Gamma$-system. Let $U \subset X$ be an effectively
open subset  such that $\mu(U)<1$. Let 
\[
U^{\ast}:=\bigcap\limits_{g \in \Gamma} g^{-1} U
\]
be the set of all points $\omega \in \{0,1\}^{\N}$ whose orbit remains in
$U$. Then $U^{\ast}$ is an effectively null set.
\end{thm}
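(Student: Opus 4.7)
Putting $W:=\{0,1\}^{\N}\setminus U$ (effectively closed, $\mu(W)=1-\mu(U)>0$) and taking the canonically computable tempered two-sided F{\o}lner sequence $(F_n)_{n\geq 1}$ from Corollary~\ref{c.cc2sidedfs}, Lindenstrauss's pointwise ergodic theorem (Theorem~\ref{t.pointwiseet}) applied to $\mathbf{1}_W\in \Ell{1}$ gives $\avg{g \in F_n}\mathbf{1}_W(g\cdot\omega)\to\mu(W)>0$ for $\mu$-a.e.\ $\omega$. Since this average is identically zero on $U^{\ast}$, we obtain $\mu(U^{\ast})=0$ at once. The substance of the theorem is therefore the \emph{effective} covering of $U^{\ast}$.

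The obvious candidates $V_n := \bigcap_{g \in F_n} g^{-1}U$ are uniformly effectively open, nested (after replacing $F_n$ by $\bigcup_{k\leq n}F_k$ if necessary), contain $U^{\ast}$, and have $\mu(V_n)\to 0$ by the above. The obstruction to using them directly as a Martin-L\"of test is that $\mu(V_n)$ is only lower semicomputable, so no computable subsequence $(n_k)$ can be certified to satisfy $\mu(V_{n_k})<2^{-k}$ by enumeration alone.

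Following Shen's strategy~\cite{shen2012} adapted to the amenable setting, I would sidestep this by working with computable clopen inner approximations $U^{(1)}\subseteq U^{(2)}\subseteq\cdots$ with $U^{(m)}\uparrow U$, obtained from the effectively open presentation of $U$ as finite unions of basic cylinders. For each clopen $U^{(m)}$ the measure $\mu(U^{(m)})$ is a computable real, and expanding the square exhibits
\[
\Bigl\|\avg{g \in F_n}\mathbf{1}_{U^{(m)}}(g\cdot)-\mu(U^{(m)})\Bigr\|_{\Ell{2}(\mu)}^{2} = \avg{g,h \in F_n}\mu\bigl(g^{-1}U^{(m)}\cap h^{-1}U^{(m)}\bigr)-\mu(U^{(m)})^{2}
\]
as a computable real, uniformly in $n$ and $m$, that tends to $0$ as $n\to\infty$ by Theorem~\ref{t.meanerg}. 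Chebyshev then supplies, for any rational $\eta>0$, an effective upper bound on $\mu\bigl(\{\omega : |\avg{g \in F_n}\mathbf{1}_{U^{(m)}}(g\omega)-\mu(U^{(m)})|>\eta\}\bigr)$, so the large-deviation sets for the clopen average form an effectively controlled family.

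The crucial and delicate step is then to weave these clopen bounds into a Martin-L\"of test actually covering $U^{\ast}$. On $U^{\ast}$ the identity $\avg{g \in F_n}\mathbf{1}_{U^{(m)}}(g\omega)+\avg{g \in F_n}\mathbf{1}_{U\setminus U^{(m)}}(g\omega)=1$ channels the residual mass through the non-clopen set $U\setminus U^{(m)}$; I would control this via Markov, $\mu\bigl(\{\omega : \avg{g \in F_n}\mathbf{1}_{U\setminus U^{(m)}}(g\omega)>c\}\bigr)\leq\mu(U\setminus U^{(m)})/c$, and splice the result with the clopen Chebyshev bound. Since $\mu(U)<1$ is non-constructive data, this calibration requires fixing once and for all a rational $q$ with $\mu(U)<q<1$ and extracting the parameters $(m_k,n_k,c_k,\eta_k)$ effectively in $k$ relative to $q$. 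The hard part, and what I expect to be the main obstacle of the proof, is that $\mu(U\setminus U^{(m)})=\mu(U)-\mu(U^{(m)})$ is itself only lower semicomputable, so the naive procedure of making $\mu(U\setminus U^{(m)})$ effectively small in $m$ is unavailable; resolving this tension between the computable clopen estimates and the non-effectively given gap $1-\mu(U)$ is the technical heart of the construction.
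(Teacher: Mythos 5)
Your proposal correctly assembles several of the right ingredients (the fixed rational $q$ with $\mu(U)<q<1$, the reduction to clopen sets with computable measure, the computable $\Ell{2}$-variance of Følner averages of clopen indicators via the mean ergodic theorem, and Chebyshev), but it stops exactly at the point where the actual argument begins, and the route you sketch for the final assembly does not close. You propose to approximate $U$ from inside by clopen sets $U^{(m)}$ and to control the residual term $\avg{g\in F_n}\indi{U\setminus U^{(m)}}(g\cdot\omega)$ by Markov with bound $\mu(U\setminus U^{(m)})/c$; as you yourself observe, $\mu(U\setminus U^{(m)})=\mu(U)-\mu(U^{(m)})$ is only lower semicomputable, so it cannot be certified small, and a single pass of this kind can at best produce a cover of $U^{\ast}$ of measure roughly $q$ plus an uncontrollable error — never a Martin-Löf test. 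This is a genuine gap, not a technicality: the obstruction you flag as "the technical heart" is precisely what the proof must, and does, circumvent by a different mechanism.

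The missing idea is an \emph{iteration} in which the ergodic theorem is never applied to $U$ or to approximations of $U$ at all, but only to single basic cylinders. Writing $U_0:=U=\bigcup_{i\in E_0}I_i$ as a disjoint union of cylinders, one sets
\[
U_k:=\bigcup_{i\in E_{k-1}}\Bigl(I_i\cap\bigcap_{g\in F_{n(i,k)}}g^{-1}(U_{k-1})\Bigr),
\]
a uniformly effectively open decreasing sequence containing $U^{\ast}$. The key estimate is
\[
\mu\Bigl(I_i\cap\bigcap_{g\in F_{n(i,k)}}g^{-1}(U_{k-1})\Bigr)\le\int\Bigl(\avg{g\in F_{n(i,k)}^{-1}}\indi{I_i}(g\cdot\omega)\Bigr)\indi{U_{k-1}}(\omega)\,d\mu\le q\,\mu(I_i)+q^k2^{-i},
\]
where $n(i,k)$ is chosen effectively so that $\|\avg{g\in F_{n(i,k)}^{-1}}\indi{I_i}(g\cdot{})-\mu(I_i)\|_2<q^k2^{-i}$ (this is exactly where your computable clopen $\Ell{2}$-estimates are used, applied to $I_i$ and to effective clopen approximations of $gI_i$). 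The set $U_{k-1}$, whose measure is not effectively known, enters only through the crude bounds $\|\indi{U_{k-1}}\|_2\le1$ and $\mu(U_{k-1})\le\mu(U)<q$; the fixed rational $q$ is the only non-effective datum needed. Summing over the disjoint cylinders gives $\mu(U_k)<q\,\mu(U_{k-1})+q^k$, hence $\mu(U_k)<(k+1)q^k$, an effectively computable bound tending to $0$, from which a Martin-Löf test covering $U^{\ast}$ is extracted. It is this multiplicative contraction by $q$ at every stage — impossible to obtain from a one-shot Chebyshev--Markov splice — that resolves the tension you identified.
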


\begin{proof}
Let $(F_n)_{n \geq 1}$ be a canonically computable two-sided F{\o}lner
sequence in $\Gamma$ and $\mu(U)< q < 1$ be some fixed rational
number. Let $(I_i)_{i \geq 1}$ be the basis of cylinder sets in $(\{0,1\}^{\N},\mu)$. Let
\begin{equation}
\label{eq.ini}
(i,k) \mapsto n(i,k), \quad \N \times \N \to \N
\end{equation}
be some total computable function, which will be chosen later, and
define a computable function $m$ by
\[
m(i,k):=\cntm{F_{n(i,k)}} \quad \text{ for } i,k \geq 1.
\]

Since $U_0:=U$ is effectively open, there is a r.e. subset $E_0 \subseteq \N$ such that $U_0 =
\bigcup\limits_{i \in E_0} I_i$ is a union of disjoint cylinder sets. Since the action of $\Gamma$ is
computable and since $(F_n)_{n \geq 1}$ is canonically computable, the sequence 
\[
i \mapsto I_i \cap \bigcap\limits_{g \in F_{n(i,1)}} g^{-1}(U_0)
\]
is a uniformly effectively open sequence of sets. Let 
\[
U_1:=\bigcup\limits_{i \in E_0} \left( I_i \cap
  \bigcap\limits_{g \in F_{n(i,1)}} g^{-1}(U_0) \right),
\]
then, clearly, $U_1 \subseteq U_0$ is an effectively open set and $U^{\ast}
\subseteq U_1$. Since $U_1$ is an effectively open set, there is a r.e. subset
$E_1 \subseteq \N$ such that $U_1 = \bigcup\limits_{i \in E_1}
I_i$ is a union of disjoint cylinder sets. Suppose that
\begin{equation}
\label{eq.expdec1}
\mu \left( I_i \cap \bigcap\limits_{g \in F_{n(i,1)}} g^{-1}(U)
\right) < q \mu(I_i) + q \cdot 2^{-i} \quad \text{ for all } i \geq 1.
\end{equation}
The cylinder sets $(I_i)_{i \in E_0}$ are pairwise disjoint, hence $\mu(U_1) \leq q \mu(U_0)+q$. 

We want to apply the same procedure
to $U_1$ and so on to obtain a sequence of uniformly open sets with
almost exponentially decaying measure. So, in general, let $k \geq 1$ and suppose that $U_{k-1} =
\bigcup\limits_{i \in E_{k-1}} I_i$ is a disjoint union of cylinder sets for an r.e. subset
$E_{k-1}$. We let
\begin{align*}
U_k:=\bigcup\limits_{i \in E_{k-1}} &\left( I_i \cap
  \bigcap\limits_{g \in F_{n(i,k)}} g^{-1}(U_{k-1}) \right) = \bigcup\limits_{i \in E_{k-1}} \left( I_i \cap
  \bigcap\limits_{g \in F_{n(i,k)}} \bigcup\limits_{j \in E_{k-1}}
  g^{-1}(I_j) \right)=\\
&=\bigcup\limits_{i \in E_{k-1}}  
  \bigcup\limits_{j_1,\dots, j_{m(i,k)} \in E_{k-1}} \left( I_i \cap \bigcap\limits_{s=1}^{m(i,k)} 
  g_{i,s}^{-1}(I_{j_s}) \right),
\end{align*}
where $g_{i,1},g_{i,2},\dots,g_{i,m(i,k)}$ is the list of all distinct
elements of $F_{n(i,k)}$. The sequence of sets 
\[
\left( I_i
\cap \bigcap\limits_{s=1}^{m(i,k)} g_{i,s}^{-1}(I_{j_s})
\right)_{i,j_1,\dots,j_{m(i,k)}}
\]
is uniformly effectively open, so it follows that $(U_k)_{k \geq 1}$
is uniformly effectively open. Clearly, $U^{\ast}
\subseteq U_k \subseteq U_{k-1}$ for every $k \geq 1$. If we show that
\begin{equation}
\label{eq.expdeck}
\mu \left( I_i \cap \bigcap\limits_{g \in F_{n(i,k)}} g^{-1}(U_{k-1})
\right) < q \mu(I_i) + q^k \cdot 2^{-i} \quad \text{ for all } i \geq 1 
\end{equation}
then $\mu(U_k) < q \mu(U_{k-1})+q^k$ for every $k$, and so $\mu(U_k) <
(k+1) q^k$, which would imply that $U^{\ast}$ is an effectively null set. Observe that
\begin{align*}
&\mu \left( I_i \cap \bigcap\limits_{g \in F_{n(i,k)}} g^{-1}(U_{k-1})
\right) \leq \min\limits_{g \in F_{n(i,k)}} \mu \left( I_i \cap  g^{-1}(U_{k-1})
\right) \leq \\
& \leq \avg{g \in F_{n(i,k)}} \mu \left( I_i \cap g^{-1}(U_{k-1})
\right) = \avg{g \in F_{n(i,k)}} \mu \left( (g I_i) \cap U_{k-1}
\right) = \\
&=\int \left( \avg{g \in F_{n(i,k)}^{-1}} \indi{I_i}(g \cdot \omega) \right) \indi{U_{k-1}}(\omega) d \mu.
\end{align*}
If, for every $i,k \geq 1$, we find effectively a number $n(i,k)$ such that
\begin{equation}
\label{eq.l2normest}
\| \avg{g \in F_{n(i,k)}^{-1}} \indi{I_i}(g \cdot \omega) -
\mu(I_i)\|_2 < q^k \cdot 2^{-i},
\end{equation}
then, due to Cauchy-Schwarz inequality, the computation above implies that
\[
\mu \left( I_i \cap \bigcap\limits_{g \in F_{n(i,k)}} g^{-1}(U_{k-1}) 
\right) \leq q \mu(I_i) +q^k \cdot 2^{-i}.
\]
Mean ergodic theorem (Theorem \ref{t.meanerg}) implies that a number $n(i,k)$
satisfying Equation \eqref{eq.l2normest} always exists, since $(
F_n^{-1} )_{n\geq 1}$ is a left F{\o}lner sequence. To find the number $n(i,k)$
\emph{effectively} we argue as follows. 

First, $(g I_i)_{g \in \Gamma, i \geq 1}$ is a uniformly effectively
open sequence of sets by definition of computability of the action of
$\Gamma$ on $\{0,1\}^{\N}$, so let $E \subseteq \Gamma \times \N \times \N$ be an
r.e. subset such that
\[
g (I_i) = \bigcup\limits_{(g,i,j) \in E} I_j \quad \text{ for all
} g \in \Gamma, i \geq 1
\]
We claim that there exists a uniformly effectively
open sequence of sets $(\Delta_{g,i}^k)_{g,i,k}$, where each
$\Delta_{g,i}^k$ is the union of the first $\cntm{\Delta_{g,i}^k}$
intervals in $g I_i$, such that the function $(g,i,k) \mapsto
\cntm{\Delta_{g,i}^k}$ is total computable and that
\begin{equation}
\label{eq.deltapp}
\mu(g I_i \setminus \Delta_{g,i}^k )< \frac{q^{2k} \cdot 2^{-2i}}{64}
\quad \text{ for all } g \in \Gamma \text{ and } i,k \geq 1.
\end{equation}
To do so, we use computability of the measure $\mu$ to find (uniformly in $i,k$ and effectively) a rational $d_i^k$ such that
\[
|\mu(I_i) - d_i^k|<\frac{q^{2k} \cdot 2^{-2i}}{256} \quad \text{ for
  all } i,k \geq 1.
\]
The set $\Delta_{g,i}^k$ is constructed as follows. Let
$\Delta_{g,i}^k=\varnothing$. Take the first
interval $I_{j_1}$ such that $(g,i,j_1) \in E$, add it to the
collection $\Delta_{g,i}^k$ and compute its measure
$\widetilde m_{g,i}$ with precision $\frac{q^{2k} \cdot
  2^{-2i}}{256}$. If
\begin{equation}
\label{eq.dgikconst}
\widetilde m_{g,i}> d_i^k-\frac{q^{2k} \cdot
  2^{-2i}}{128},
\end{equation}
then we are done. Otherwise, we add the next interval $I_{j_2}$ such
that $(g,i,j_2) \in E$ to the collection $\Delta_{g,i}^k$, compute the 
measure $\widetilde m_{g,i}$ of the union of intervals in $\Delta_{g,i}^k$ with precision $\frac{q^{2k} \cdot
  2^{-2i}}{256}$ and check the condition \eqref{eq.dgikconst} once
again and so on. The algorithm eventually terminates, it is clear that it
provides a uniformly effectively open sequence of sets
$(\Delta_{g,i}^k)_{g,i,k}$, and a direct computation shows that
condition \eqref{eq.deltapp} is satisfied as well.

The number $n(i,k)$ is defined as the smallest nonnegative integer
such that
\[
\| \avg{g \in F_{n(i,k)}} \indi{\Delta_{g,i}^k} - d_i^k\|_2 < \frac{q^k
\cdot 2^{-i}}{2},
\]
where the $\Ell{2}$-norm is computed, say, with a $\frac{q^{2k} \cdot
  2^{-2i}}{256}$-precision. Such $n(i,k)$ exists due to Mean Ergodic
Theorem and our choice of the sets $\Delta_{g,i}^k$. Furthermore, it
is computable, since the sequence of sets $(\Delta_{g,i}^k)$ is
uniformly effectively open, the measure $\mu$ is computable and
$(F_{n})_{n \geq 1}$ is a computable F{\o}lner sequence.
\end{proof}

\subsection{Birkhoff's Theorem}
\label{ss.mt}

In this section we prove the main theorems of the article. Our main
technical tools are the generalization of Ku\v{c}era's theorem from
the previous section, the
result of Lindenstrauss about pointwise convergence of ergodic
averages and Lemmas \ref{l.lsinv1}, \ref{l.lsinv2} about the
invariance of limsup of averages. First, we prove Birkhoff's effective
ergodic theorem for indicator functions of effectively opens sets.
\begin{lemma}
\label{l.ebirk2stamen}
Let $\Gamma$  be a computable amenable group with a canonically computable tempered
two-sided F{\o}lner sequence $(F_n)_{n \geq 1}$. Suppose that
$(\{0,1\}^{\N},\mu,\Gamma)$ is a computable ergodic Cantor system and that $U
\subseteq \{0,1\}^{\N}$ is an effectively open set. For every Martin-L\"{o}f random $\omega \in \{0,1\}^{\N}$
the equality
\[
\lim\limits_{n \to \infty} \avg{g \in F_n} \indi{U}(g \cdot \omega) = \mu(U)
\]
holds.
\end{lemma}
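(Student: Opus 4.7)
Set $A_n(\omega) := \avg{g \in F_n} \indi{U}(g \cdot \omega)$; these are uniformly lower semicomputable in $n$ and take values in $[0,1]$. The plan is to prove the two matching inequalities $\liminf_n A_n(\omega) \geq \mu(U)$ and $\limsup_n A_n(\omega) \leq \mu(U)$ for every Martin-L\"{o}f random $\omega$, using Ku\v{c}era's theorem (Theorem \ref{t.kt}), the limsup invariance (Lemma \ref{l.lsinv1}), and Lindenstrauss's pointwise ergodic theorem (Theorem \ref{t.pointwiseet}).

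First I would reduce the lower bound to the upper bound via clopen approximation. Given rational $r < \mu(U)$, effective openness of $U$ together with computability of $\mu$ let us effectively produce a finite union of cylinders $V \subseteq U$ with $\mu(V) > r$. The complement $V^{\mathrm c}$ is clopen, hence effectively open, so applying the upper bound to $V^{\mathrm c}$ gives $\limsup_n \avg{g \in F_n} \indi{V^{\mathrm c}}(g \cdot \omega) \leq \mu(V^{\mathrm c})$ for every Martin-L\"{o}f random $\omega$. Complementing and using $\indi{V} \leq \indi{U}$, we get $\liminf_n A_n(\omega) \geq \liminf_n \avg{g \in F_n} \indi{V}(g \cdot \omega) \geq \mu(V) > r$; letting $r \uparrow \mu(U)$ along the rationals finishes the lower bound.

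For the upper bound, fix rational $q > \mu(U)$. Lemma \ref{l.lsinv1} applied to the bounded function $g \mapsto \indi{U}(g \cdot \omega)$ (using that $(F_n)$ is right F{\o}lner) shows that $L(\omega) := \limsup_n A_n(\omega)$ is $\Gamma$-invariant. For each $N$ the set $W_{N,q} := \bigcup_{n \geq N} \{\omega : A_n(\omega) > q\}$ is effectively open, uniformly in $(N,q)$. If $L(\omega) > q$ then by invariance $L(g \cdot \omega) > q$ for every $g$, whence $g \cdot \omega \in W_{N,q}$ for all $g$ and $N$, placing $\omega$ in the orbit-intersection $W_{N,q}^{\ast} := \bigcap_{g \in \Gamma} g^{-1} W_{N,q}$. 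Theorem \ref{t.kt} asserts that $W_{N,q}^{\ast}$ is effectively null whenever $\mu(W_{N,q}) < 1$, the corresponding Martin-L\"{o}f test being uniform in any rational upper bound $c < 1$ on $\mu(W_{N,q})$.

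The main obstacle is obtaining effectively some $N$ and $c < 1$ with $\mu(W_{N,q}) \leq c$, for $q$ arbitrarily close to $\mu(U)$. I would combine two ingredients. Lindenstrauss's weak-$(1,1)$ maximal inequality for tempered F{\o}lner sequences yields a uniform estimate $\mu(W_{N,q}) \leq C \mu(U)/q$ with a computable tempered constant $C$, giving the crude conclusion $L(\omega) \leq C\mu(U)$. To sharpen to $\leq \mu(U)$, I would decompose $\indi{U} = \indi{U^{(k)}} + \indi{U \setminus U^{(k)}}$ along a computable exhaustion $U^{(k)} \uparrow U$ by finite unions of cylinders; the clopen piece $\indi{U^{(k)}}$ is handled by the effective mean ergodic theorem (Theorem \ref{t.meanerg}), whose $L^2$-error $\| A_n^{U^{(k)}} - \mu(U^{(k)}) \|_2^2$ reduces to computable intersection-measures of clopens, while the tail $\indi{U \setminus U^{(k)}}$ is controlled by the crude estimate $C(\mu(U) - \mu(U^{(k)}))$, which is effectively small for large $k$. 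This two-stage bootstrap permits $q$ to approach $\mu(U)$ while retaining $\mu(W_{N,q}) < 1$, completing the proof.
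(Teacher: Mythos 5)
Your lower bound is exactly the paper's argument (clopen inner approximation plus complementation), and the skeleton of your upper bound --- the sets $W_{N,q}$, Ku\v{c}era's theorem applied to their orbit-intersections, and Lemma \ref{l.lsinv1} for the invariance of the limsup --- is also the paper's. The divergence is your final paragraph: the ``main obstacle'' you identify is not actually there. Martin-L\"{o}f randomness only requires that $\omega$ avoid every effectively null set that \emph{exists}; nothing has to be found uniformly or effectively. By Theorem \ref{t.pointwiseet} the intersection $\bigcap_{N} W_{N,q}$ is $\mu$-null (the averages converge a.e.\ to $\mu(U) < q$), so \emph{some} $N$ satisfies $\mu(W_{N,q}) < 1$. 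For that particular $N$ (and any rational $c$ with $\mu(W_{N,q}) < c < 1$, which likewise merely needs to exist), $W_{N,q}$ is a single effectively open set of measure less than $1$, Theorem \ref{t.kt} applies verbatim, and $W_{N,q}^{\ast}$ is effectively null. That is all the paper uses; no maximal inequality and no bootstrap are needed.

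Moreover, the machinery you propose to overcome this non-obstacle would not close the gap it purports to close. The mean ergodic theorem controls $\| A_n^{U^{(k)}} - \mu(U^{(k)}) \|_2$ for a \emph{single} $n$, whereas $W_{N,q}$ involves a supremum over all $n \geq N$; passing from one-time $\mathrm{L}^2$ control to a bound on the measure of that supremum requires a quantitative maximal or upcrossing argument that you have not supplied, and the weak-$(1,1)$ maximal inequality only controls the tail piece $U \setminus U^{(k)}$. So if the effective bound on $\mu(W_{N,q})$ really were needed, your sketch would have a genuine hole. As it stands, your proof becomes correct --- and coincides with the paper's --- once the last paragraph is replaced by the non-effective existence of a suitable $N$.
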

\begin{proof}
First, let us show that
\[
\limsup \limits_{n \to \infty} \avg{g \in F_n} \indi{U}(g \cdot
\omega) \leq \mu(U)
\]
for every Martin-L\"{o}f random $\omega$. Let 
\[
q > \mu(U)
\] 
be some fixed rational number. Let
\[
A_k:=\{ x \in \{0,1\}^{\N}: \sup\limits_{n \geq k} \avg{g \in F_n} \indi{U}(g
\cdot x) > q\} \quad \text{ for } k \geq 1,
\]
which is an effectively open set. Pointwise ergodic theorem (Theorem \ref{t.pointwiseet}) implies that $\mu(\bigcap\limits_{k \geq 1} A_k) = 0$, hence there
is some $k \geq 1$ such that $\mu(A_k)<1$. Let $\omega \in \{0,1\}^{\N}$ be an arbitrary
Martin-L\"{o}f random point. It follows from Theorem
\ref{t.kt} that $\omega \notin A_k^{\ast}$, hence there exists $g_0 \in \Gamma$ such that $g_0 \cdot \omega
\notin A_k$. Hence
\[
\limsup\limits_{n \geq 1} \avg{g \in F_n} \indi{U}(g \cdot (g_0 \cdot \omega)) \leq q.
\]
The function $g \mapsto \indi{U}(g \cdot \omega)$ on $\Gamma$ is
bounded, thus we can use Lemma \ref{l.lsinv1} to deduce that
\[
\limsup\limits_{n \geq 1} \avg{g \in F_n} \indi{U}(g \cdot \omega) = \limsup\limits_{n \geq 1} \avg{g \in F_n} \indi{U}(g \cdot (g_0 \cdot \omega)) \leq q.
\]
Since $q > \mu(U)$ is an arbitrary rational, this implies that $
\limsup\limits_{n \geq 1} \avg{g \in F_n} \indi{U}(g \cdot \omega)
\leq \mu(U)$.

Secondly, if $U = \bigcup\limits_{i \in E} I_i$ for an r.e. subset
$E \subseteq \N$, we let $\Delta_k \subseteq U$ be the union $I_{i_1},\dots,I_{i_k}$ of the first $k$
intervals in $U$ for every $k \geq 1$. Then $\Delta_k$ is a clopen
subset, and its compliment $\Delta_k^c$ is an effectively open
set. The preceding argument, applied to $\Delta_k^c$, implies that
\[
\limsup\limits_{n \geq 1} \avg{g \in F_n} \indi{\Delta_k^c}(g \cdot \omega)
\leq \mu(\Delta_k^c)=1-\mu(\bigcup\limits_{j=1}^k I_{i_j}).
\]
Since $k \geq 1$ is arbitrary, it follows easily that
\[
\mu(U) \leq \liminf\limits_{n \geq 1} \avg{g \in F_n} \indi{U}(g \cdot
\omega)
\]
and the proof is complete.
\end{proof}

We proceed to the main theorems of the article.
\begin{thm}
\label{t.ebirk2amen}
Let $\Gamma$  be a computable amenable group with a canonically computable tempered
two-sided F{\o}lner sequence $(F_n)_{n \geq 1}$. Suppose that
$(\{0,1\}^{\N},\mu,\Gamma)$ is a computable ergodic $\Gamma$-system. For every bounded
lower semicomputable $f$ and for every Martin-L\"{o}f random $\omega \in \{0,1\}^{\N}$
the equality
\[
\lim\limits_{n \to \infty} \avg{g \in F_n} f(g
\cdot \omega) = \int\limits f d \mu
\]
holds.
\end{thm}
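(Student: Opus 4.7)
The plan is to split the claim into $\liminf_n \avg{g \in F_n} f(g \cdot \omega) \geq \int f \, d\mu$ and $\limsup_n \avg{g \in F_n} f(g \cdot \omega) \leq \int f \, d\mu$, and to establish each bound for every Martin-L\"{o}f random $\omega$ by rather different means.

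For the $\liminf$-direction I would do a monotone approximation on top of Lemma \ref{l.ebirk2stamen}. Since $f$ is bounded and lower semicomputable, the sets $\{f > q\}$ for $q \in \Q$ are uniformly effectively open, so dyadic slicing lets me write $f$ as the pointwise supremum of an increasing sequence of simple functions $f_k = \sum_i q_{k,i} \indi{U_{k,i}}$ with nonnegative rational coefficients and effectively open $U_{k,i}$. For each fixed $k$, Lemma \ref{l.ebirk2stamen} together with linearity of the averages yields $\lim_n \avg{g \in F_n} f_k(g \cdot \omega) = \int f_k \, d\mu$ for every Martin-L\"{o}f random $\omega$. The pointwise bound $f \geq f_k$ then gives $\liminf_n \avg{g \in F_n} f(g \cdot \omega) \geq \int f_k \, d\mu$, and letting $k \to \infty$ with monotone convergence finishes this direction.

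The $\limsup$-direction is the main work and combines the three hard inputs of the paper. Fix a rational $\alpha > \int f \, d\mu$ and set
\[
V_k := \bigcup_{n \geq k} \Bigl\{ x \in \{0,1\}^{\N} : \avg{g \in F_n} f(g \cdot x) > \alpha \Bigr\}, \qquad k \geq 1.
\]
Since $f$ is lower semicomputable, the action is computable and $(F_n)$ is canonically computable, each map $x \mapsto \avg{g \in F_n} f(g \cdot x)$ is lower semicomputable uniformly in $n$, so $(V_k)_{k \geq 1}$ is a decreasing sequence of uniformly effectively open sets. Lindenstrauss' pointwise ergodic theorem (Theorem \ref{t.pointwiseet}) applied to the bounded, hence $\Ell{1}$, function $f$ and the tempered left F{\o}lner sequence $(F_n)$ forces $\mu(\bigcap_k V_k) = 0$, so $\mu(V_{k_0}) < 1$ for some $k_0$. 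The generalized Ku\v{c}era theorem (Theorem \ref{t.kt}) then makes the orbit-trap $V_{k_0}^\ast$ effectively null, so any Martin-L\"{o}f random $\omega$ admits an element $g_0 \in \Gamma$ with $g_0 \cdot \omega \notin V_{k_0}$, equivalently $\avg{g \in F_n} f(g \cdot g_0 \cdot \omega) \leq \alpha$ for every $n \geq k_0$.

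The last, and I expect most subtle, step is transferring this bound from $g_0 \cdot \omega$ back to $\omega$ itself. Here the boundedness of $f$ is essential: the function $g \mapsto f(g \cdot \omega)$ belongs to $\bell{\infty}(\Gamma, \R)$, so Lemma \ref{l.lsinv1} applied with $h := g_0$ to the right F{\o}lner sequence $(F_n)$ gives $\limsup_n \avg{g \in F_n} f(g \cdot \omega) = \limsup_n \avg{g \in F_n} f(g \cdot g_0 \cdot \omega) \leq \alpha$, and letting $\alpha \searrow \int f \, d\mu$ through rationals yields the upper bound. I expect this final link to be the main obstacle precisely because without boundedness Lemma \ref{l.lsinv1} fails (Remark \ref{r.lsnotinv}), which is presumably why the unbounded version of the theorem must wait for the polynomial-growth setting, where Lemma \ref{l.lsinv2} takes its place.
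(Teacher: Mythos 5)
Your proposal is correct and follows essentially the same route as the paper: the $\limsup$ bound via the effectively open sets $V_k$, Lindenstrauss' pointwise theorem, the generalized Ku\v{c}era theorem, and the transfer back to $\omega$ by Lemma \ref{l.lsinv1} is exactly the argument of Lemma \ref{l.ebirk2stamen} that the paper invokes, and your dyadic-slicing $\liminf$ argument is the same monotone approximation by rational combinations of indicators of effectively open sets that the paper uses (phrased there as an $\varepsilon$-close minorant $h$).
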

\begin{proof}
Firstly, the proof that
\[
\limsup \limits_{n \to \infty} \avg{g \in F_n} f(g \cdot
\omega) \leq \int f d \mu
\]
for every Martin-L\"{o}f random $\omega$ is completely analogous to
the first part of the proof of Lemma \ref{l.ebirk2stamen} above. In
particular, the argument about the
translation-invariance of 
\[
\limsup\limits_{n \geq 1} \avg{g \in F_n}
f(g \cdot \omega)
\]
remains valid, since $f$ is a bounded function and we
can once again use Lemma \ref{l.lsinv1}.

Secondly, given an arbitrary $\varepsilon>0$, let $0 \leq h \leq f$ be
a finite linear combination of indicator functions of effectively open sets
such that
\[
\| f - h \|_1 \leq \varepsilon.
\]
An application of Lemma \ref{l.ebirk2stamen} yields that
\[
\liminf\limits_{n \geq 1}\avg{g \in F_n} f(g \cdot \omega) \geq
\liminf\limits_{n \geq 1}\avg{g \in F_n} h(g \cdot \omega) \geq \int h
d\mu \geq \int f d\mu - \varepsilon,
\]
which completes the proof, since $\varepsilon>0$ is arbitrary.
\end{proof}

\begin{rem}
\label{r.stdbirk}
Compared to \cite{shen2012}, we make an additional assumption in
Theorem \ref{t.ebirk2amen} that the observable is \emph{bounded}. The reason for
that is that the invariance of $\limsup$ is only in general guaranteed
by Lemma \ref{l.lsinv1} for bounded functions.
\end{rem}

In a special case, when  $\Gamma$ is a computable group of polynomial
growth, we can remove the additional assumption about the boundedness
of $f$. The theorem below is a generalization of \cite[Theorem 8]{shen2012}.
\begin{thm}
\label{t.ebirk2gpg}
Let $\Gamma$  be a computable group of polynomial growth with the F{\o}lner
sequence of balls around $\ue \in \Gamma$ given by
\[
F_n:= \{ g \in \Gamma: \| g \| \leq n\} \quad \text{for } n \geq 1.
\]
Suppose that $(\{0,1\}^{\N},\mu,\Gamma)$ is a computable ergodic $\Gamma$-system. For every lower semicomputable $f$ and for every Martin-L\"{o}f random $\omega \in \{0,1\}^{\N}$
the equality
\[
\lim\limits_{n \to \infty} \avg{g \in F_n} f(g
\cdot \omega) = \int\limits f d \mu
\]
holds.
\end{thm}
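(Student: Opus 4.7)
The plan is to reduce to Theorem \ref{t.ebirk2amen} by approximation for the lower bound, and to mimic the argument of Lemma \ref{l.ebirk2stamen} for the upper bound, with Lemma \ref{l.lsinv2} playing the role of Lemma \ref{l.lsinv1}. Fix a Martin-L\"{o}f random $\omega$. For the lower bound, for each $N \in \N$ the truncation $f \wedge N$ is a bounded lower semicomputable function, so Theorem \ref{t.ebirk2amen} gives $\lim_n \avg{g \in F_n} (f \wedge N)(g \cdot \omega) = \int (f \wedge N)\, d\mu$. Since $f \geq f \wedge N$, monotone convergence as $N \to \infty$ yields
\[
\liminf_{n \to \infty} \avg{g \in F_n} f(g \cdot \omega) \;\geq\; \int f \, d\mu.
\]
In particular, the theorem is proved whenever $\int f\, d\mu = +\infty$, so henceforth I assume $f \in \Ell{1}(\{ 0,1\}^{\N})$.

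For the matching upper bound I adapt Lemma \ref{l.ebirk2stamen}. Fix a rational $q > \int f\, d\mu$ and set
\[
A_k := \Bigl\{ x \in \{ 0,1\}^{\N} : \sup_{n \geq k} \avg{g \in F_n} f(g \cdot x) > q \Bigr\}, \qquad k \geq 1.
\]
Since $f$ is lower semicomputable and the action of $\Gamma$ is computable, the functions $x \mapsto \avg{g \in F_n} f(g \cdot x)$ are lower semicomputable uniformly in $n$, so each $A_k$ is effectively open. By Lemma \ref{l.gpgamen} the sequence $(F_n)_{n \geq 1}$ is a tempered left F\o lner sequence, so Theorem \ref{t.pointwiseet} applied to $f$ gives $\mu(\bigcap_k A_k) = 0$, and hence $\mu(A_{k_0}) < 1$ for some $k_0$.

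Ku\v{c}era's theorem (Theorem \ref{t.kt}) now ensures that $A_{k_0}^{\ast}$ is effectively null, so there exists $g_0 \in \Gamma$ with $g_0 \cdot \omega \notin A_{k_0}$, whence $\limsup_n \avg{g \in F_n} f(gg_0 \cdot \omega) \leq q$. Applying Lemma \ref{l.lsinv2} to the nonnegative function $g \mapsto f(g \cdot \omega)$ with $h = g_0$ yields
\[
\limsup_{n \to \infty} \avg{g \in F_n} f(g \cdot \omega) \;=\; \limsup_{n \to \infty} \avg{g \in F_n} f(gg_0 \cdot \omega) \;\leq\; q,
\]
and sending $q \downarrow \int f\, d\mu$ finishes the argument. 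The decisive point (and the reason the statement demands polynomial growth rather than mere amenability) is this translation-invariance step: Remark \ref{r.lsnotinv} shows that for a general amenable group the $\limsup$ of averages of an unbounded nonnegative function need not be invariant under right translation, so Lemma \ref{l.lsinv1} is insufficient here and Lemma \ref{l.lsinv2} is the essential new ingredient that polynomial growth provides.
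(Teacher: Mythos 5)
Your proof is correct and follows essentially the same route as the paper: the upper bound via the effectively open sets $A_k$, Ku\v{c}era's theorem, and the translation-invariance of the $\limsup$ supplied by Lemma \ref{l.lsinv2} (which is indeed exactly where polynomial growth enters), with the lower bound obtained by approximating $f$ from below. The only minor difference is that you get the lower bound by truncating $f$ at level $N$ and invoking the bounded case (Theorem \ref{t.ebirk2amen}) together with monotone convergence, whereas the paper's Theorem \ref{t.ebirk2amen} approximates $f$ in $\Ell{1}$ by finite combinations of indicators of effectively open sets; your variant is equally valid and also handles the case $\int f\,d\mu=+\infty$ explicitly.
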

\begin{proof}
The argument is identical to the reasoning in  Theorem \ref{t.ebirk2amen}. We
use Lemma \ref{l.lsinv2} for the invariance of $\limsup$ of averages,
hence obtaining the proof for an \emph{arbitrary} lower semicomputable $f$.
\end{proof}

\printbibliography[]
\end{document}